\newtheorem{theorem}{Theorem}[section]
\newtheorem{lemma}[theorem]{Lemma}
\newtheorem{proposition}[theorem]{Proposition}
 \theoremstyle{definition}
\newtheorem{definition}[theorem]{Definition}
\theoremstyle{remark}
\newtheorem{remark}[theorem]{Remark}
\numberwithin{equation}{section}
\begin{document}

\title{Dirichlet problem for degenerate Hessian quotient type curvature equations}

\author{Xiaojuan Chen}
\address{Faculty of Mathematics and Statistics, Hubei Key Laboratory of Applied Mathematics, Hubei University,  Wuhan 430062, P.R. China}
\email{201911110410741@stu.hubu.edu.cn}

\author{Qiang Tu$^{\ast}$}
\address{Faculty of Mathematics and Statistics, Hubei Key Laboratory of Applied Mathematics, Hubei University,  Wuhan 430062, P.R. China}
\email{qiangtu@hubu.edu.cn}

\author{Ni Xiang}
\address{Faculty of Mathematics and Statistics, Hubei Key Laboratory of Applied Mathematics, Hubei University,  Wuhan 430062, P.R. China}
\email{nixiang@hubu.edu.cn}

\subjclass[2010]{Primary 35J15; Secondary 35B45.}
\thanks{This research was supported by funds from the National Natural Science Foundation of China No. 11971157, 12101206; the Natural Science Foundation of Hubei Province, China, No. 2023AFB730.}
\thanks{$\ast$ Corresponding author}

\date{}

\begin{abstract}
In the paper, we prove the existence and uniqueness results of the $C^{1,1}$ regular graphic hypersurface for Dirichlet problem of a class of degenerate Hessian quotient type curvature equations under the condition $\psi^{\frac{1}{k-l}}\in C^{1,1}(\overline{\Omega}\times\mathbb{R}\times\mathbb{S}^n)$. Specially, we also consider the second order derivative estimates for the corresponding degenerate Hessian type curvature equations under the optimal condition $\psi^{\frac{1}{k-1}}\in C^{1,1}(\overline{\Omega}\times\mathbb{R}\times\mathbb{S}^n)$.
\end{abstract}

\keywords{Hessian quotient, degenerate curvature function, Dirichlet problem, the a priori estimates}

\subjclass[2010]{
35J70, 35J15.
}

\maketitle
\vskip4ex

%%%%%%%%%%%%%%%%%%%%%%%%%%%%%%%%%%%
\section{Introduction}
In this paper, we consider Dirichlet problem for the following degenerate Hessian quotient type curvature equations
\begin{equation}\label{Eq1}
\frac{\sigma_k}{\sigma_l}(\lambda(\eta[M_u]))=\psi(X,\nu),\quad\mbox{in}~\Omega
\end{equation}
with homogenous boundary data. Here $\Omega\subset\mathbb{R}^{n}$ is a bounded domain, $M_u=\{(x,u(x));x\in\Omega\}$ is the graphic hypersurface defined by the function $u$, $X=(x,u(x))$ is the position vector of $M_u$, $\nu$ is the unit outward normal vector, $\sigma_k$ is the $k$-th elementary symmetric function, $\lambda(\eta[M_u])=(\lambda_1,\cdots,\lambda_n)$ is the eigenvalue vector of $g^{-1}\eta$ on $M_u$. $\psi\geq0$ is smooth enough with respect to every variable.
The $(0,2)$-tensor $\eta$ on $M_u$ is defined by
$$\eta_{ij}=Hg_{ij}-h_{ij},$$
where $g_{ij}$ and $h_{ij}$ are the first and second fundamental forms of $M_u$, respectively. $H$ is the mean curvature of $M_u$. Actually, $\lambda(\eta[M_u])$ in equation \eqref{Eq1} also arises in complex geometry, which has attracted the interest of many authors due to its geometric applications such as Gauduchon conjecture \cite{Ga84}. The admissible set for equation \eqref{Eq1} is defined as follows.
\begin{definition}
 A $C^2$ regular hypersurface $M_u\subset\mathbb{R}^{n+1}$ is called $(\eta,k)$-convex if its principal curvature vector $\kappa(X)\in\widetilde{\Gamma}_k$. As in \cite{CDH23}, $\widetilde{\Gamma}_k$ is defined by
 \begin{equation}\label{wid}
   \widetilde{\Gamma}_k:=\{\kappa=(\kappa_1,\cdots,\kappa_n)\in\mathbb{R}^n:\sigma_i(\lambda(\eta))>0, \lambda_i(\eta)=\sum_{j\neq i}\kappa_j, 1\leq i \leq k\}
 \end{equation}
for all $X\in M_u$. For $\Omega\subset\mathbb{R}^n$, a function $u: \Omega\rightarrow \mathbb{R}$ is called admissible if its graph is $(\eta,k)$-convex.
\end{definition}

If $\lambda(\eta[M_u])$ is replaced by the principal curvature $\kappa[M_u]$ and $l=0$, equation \eqref{Eq1} becomes the classical $k$-curvature equations
\begin{equation}\label{cur}
  \sigma_k(\kappa[M_u])=\psi(X,\nu).
\end{equation}
For the non-degenerate case, Dirichlet problem for equation \eqref{cur} has been achieved fruitful results. When $k=1,2$ and $n$, the left hand side of equation \eqref{cur} corresponds to the mean, scalar and Gauss curvature of the hypersurface $M_u$ respectively. The classical Dirichlet problem for the prescribed mean and Gauss curvature equations has been extensively studied, which can refer to \cite{Ba82, GT83, Se69, TU83}. For the intermediate case $1<k<n$, the first breakthroughs were due to Caffarelli-Nirenberg-Spruck \cite{CNS88} and Ivochkina \cite{IV1990, IV1991} for Dirichlet problem of equation \eqref{cur} with homogenous boundary data when $\psi$ is independent of $\nu$.
In fact, Ivochkina \cite{IV1990, IV1991} treated the case of $\sigma_k$. Caffaralli-Nirenberg-Spruck allowed for more general symmetric functions than $\sigma_k$, but still excluded Hessian quotient $\frac{\sigma_k}{\sigma_l}$. Then Lin-Trudinger \cite{LT94} considered Hessian quotient case $\frac{\sigma_k}{\sigma_l}$ with homogenous boundary date. Ivochkina-Lin-Trudinger \cite{ILT1996} extended  Lin-Trudinger's work to general boundary data but under exclusion of the case $k = n$. For the case $\frac{\sigma_n}{\sigma_l}$ with $0\leq l<n$ was subsequently studied by Ivochkina-Tomi \cite{IT98}. Recently, we considered Pogorelov type estimates of semi-convex solutions for Dirichlet problem of  equation \eqref{cur} with $(k+1)$-convex boundary data in \cite{CTX23}. The readers can refer to \cite{CNS86, GS04, GRW15, RW19, RW20, STW12, SUW04, SX17, Tru90, U00} for more researches about non-degenerate curvature equations.

The research of degenerate curvature equations can be tracked back to the degenerate Weyl problem studied by Guan-Li \cite{GL94} and Hong-Zuily \cite{HZ95}. Later on, Guan-Li \cite{GL97} studied the prescribed Gauss curvature problem. As we all know, for degenerate curvature equations, $C^{1,1}$ regularity of the solutions maybe the best regularity one can expect \cite{W95}. Guan-Zhang \cite{GZ} established $C^{1,1}$ estimates for a class of curvature type equations which is the combination of $\sigma_k$. When $\psi=\psi(x,u)\geq 0$ in equation \eqref{cur}, Jiao-Wang \cite{JW22} proved the existence of $C^{1,1}$ regular solutions for Dirichlet problem of equation \eqref{cur} with homogenous boundary data under the condition $\psi^{\frac{1}{k-1}}\in C^{1,1}(\mathbb{R}^n\times\mathbb{R})$. Then Jiao-Jiao \cite{Jiao22} established Pogorelov type estimates for Dirichlet problem of equation \eqref{cur} with $(k+1)$-convex boundary data under the condition $\psi^{\frac{1}{k-1}}\in C^{1,1}(\overline{\Omega}\times\mathbb{R})$.

Now, we review some of the facts on the non-degenerate case of equation \eqref{Eq1}. Chu-Jiao \cite{CJ21} studied the following prescribed curvature equations
\begin{equation}\label{xnu}
  \sigma_k(\lambda(\eta))=\psi(X,\nu),
\end{equation}
and they established curvature estimates for equation \eqref{xnu}. Then in \cite{CTX20}, we generalized Chu-Jiao's results to Hessian quotient case. It is of interest to consider the existence and regularity of solutions for curvature equations in graph form.
Specifically, Jiao-Sun \cite{JS22} considered Dirichlet problem for the following curvature equations
\begin{equation*}
  \det(\lambda(\eta[M_u]))=\psi(X,\nu),
\end{equation*}
and they derived the existence of  $C^{1,1}$ regular graphic $(\eta,n)$-convex hypersurfaces. Inspired by Jiao-Sun's work, it is natural to study Dirichlet problem of Hessian quotient type curvature equation \eqref{Eq1} for the degenerate case.

 In order to deal with equation \eqref{Eq1}, we usually need some geometric conditions on $\Omega$ as in \cite{IV1991}. If $k<n$, a bounded domain $\Omega$ in $\mathbb{R}^n$ is called uniformly $k$-convex if there exists a positive constant $K$ such that for each $x\in \partial\Omega$,
$$(\kappa_1^b(x),\cdots,\kappa_{n-1}^b(x),K)\in \Gamma_{k+1},$$
where $\kappa_1^b(x),\cdots,\kappa_{n-1}^b(x)$ are the principal curvatures of $\partial\Omega$ at $x$ and $\Gamma_k$ is the G{\aa}rding's cone defined by \eqref{gar}.
If $k=n$, it is easy to see that $n$-convexity is equivalent to strict convexity.

The main theorems are as follows.
\begin{theorem}\label{main}
Let $k\geq 2,~ 0\leq l<k<n$, $\Omega$ be a uniformly $k$-convex bounded domain in $\mathbb{R}^n$ with $\partial\Omega\in C^{2,1}$. Suppose that $\psi^{\frac{1}{k-l}}(x,z,\nu)\in C^{1,1}(\overline{\Omega}\times\mathbb{R}\times\mathbb{S}^n)\geq 0$ and $\psi_z\geq 0$. Assume that there exists an admissible subsolution $\underline{u}\in C^{1,1}(\overline{\Omega})$ satisfying $\kappa[M_{\underline{u}}]\in \widetilde{\Gamma}_k$ and
\begin{equation}\label{sub}
\left\{
\begin{aligned}
&\frac{\sigma_k}{\sigma_l}(\lambda(\eta[M_{\underline{u}}]))\geq \psi(\underline{X},\nu(\underline{X})), &&in~\Omega,\\
&\underline{u}= 0, &&on~\partial \Omega,
\end{aligned}
\right.
\end{equation}
where $\underline{X}=(x,\underline{u}(x))$ and $\nu(\underline{X})$ is the unit outward normal vector at $\underline{X}\in M_{\underline{u}}$. Then there exists a unique admissible solution $u\in C^{1,1}(\overline{\Omega})$ of Dirichlet problem
\begin{equation}\label{ali}
  \left\{
\begin{aligned}
&\frac{\sigma_k}{\sigma_l}(\lambda(\eta[M_u]))=\psi(X,\nu), &&in~\Omega,\\
&u = 0, &&on~\partial \Omega,
\end{aligned}
\right.
\end{equation}
satisfying
$$\|u\|_{C^{1,1}(\overline{\Omega})}\leq C,$$
where $C$ is a positive constant depending on $n, k, l, \Omega, \|\underline{u}\|_{C^{0,1}}$ and $\|\psi^{\frac{1}{k-l}}\|_{C^{1,1}}$.
\end{theorem}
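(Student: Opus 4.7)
The plan is to combine an approximation scheme, the continuity method, and uniform $C^{1,1}$ a priori estimates. For $\epsilon>0$ I would approximate the degenerate right-hand side by choosing $\psi_\epsilon$ with $\psi_\epsilon^{1/(k-l)}=\psi^{1/(k-l)}+\epsilon$; then $\psi_\epsilon\geq\epsilon^{k-l}>0$, the norm $\|\psi_\epsilon^{1/(k-l)}\|_{C^{1,1}}$ is bounded independently of $\epsilon$, and $\underline u$, after a standard small perturbation that preserves the subsolution property and the zero boundary data, remains an admissible subsolution of the $\epsilon$-approximated problem. For each fixed $\epsilon$ the equation is non-degenerate, and the existence of a unique admissible smooth solution $u_\epsilon$ with $u_\epsilon|_{\partial\Omega}=0$ then follows from the classical continuity method for Hessian-quotient type curvature equations in the spirit of Lin--Trudinger \cite{LT94} and Jiao--Sun \cite{JS22}, once a priori $C^2$ bounds along the path are available. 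The real goal is to show that $\|u_\epsilon\|_{C^{1,1}(\overline\Omega)}\leq C$ uniformly in $\epsilon$, so that $u_\epsilon$ converges (along a subsequence) to an admissible solution $u\in C^{1,1}(\overline\Omega)$ of \eqref{ali}.

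For the uniform $C^0$ bound, the comparison principle (which uses $\psi_z\geq 0$ together with ellipticity of $\sigma_k/\sigma_l$ on admissible functions) combined with $\underline u$ and the zero upper barrier yields $\underline u\leq u_\epsilon\leq 0$. The interior gradient bound is the standard graph-form estimate, and the boundary gradient bound follows by using $\underline u$ as a lower barrier and an appropriate constant multiple of the signed distance to $\partial\Omega$ as an upper barrier, the latter provided by the uniform $k$-convexity of $\partial\Omega$ as in \cite{CNS88,IV1991,LT94}. These estimates are independent of $\epsilon$.

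The main obstacle is the uniform second-derivative estimate, and the crucial point is that it must depend on $\|\psi^{1/(k-l)}\|_{C^{1,1}}$ rather than on $\|\psi\|_{C^{1,1}}$, since the latter blows up as $\psi\to 0$. The central idea, in the spirit of Jiao--Wang \cite{JW22} and Jiao--Sun \cite{JS22}, is to rewrite the equation in the concave, $1$-homogeneous form $F(\lambda(\eta[M_{u_\epsilon}]))=\psi_\epsilon^{1/(k-l)}$ with $F=(\sigma_k/\sigma_l)^{1/(k-l)}$; differentiating twice in a tangent direction produces a right-hand side involving only $D^2(\psi_\epsilon^{1/(k-l)})$, which is uniformly bounded, and concavity of $F$ on $\widetilde\Gamma_k$ supplies a negative second-order term to absorb bad quadratic terms. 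Applying the maximum principle to an auxiliary function of the form $W=\log\lambda_{\max}(\eta[M_{u_\epsilon}])+\phi(|\nabla u_\epsilon|^2)+A(u_\epsilon-\underline u)$, with suitably chosen $\phi$ and large $A$, gives the interior $C^2$ bound. The boundary $C^2$ estimate splits into double-tangential, mixed tangential-normal, and pure normal components; the first two are handled by $\underline u$-barriers and tangential differentiation, while the double-normal bound requires adapting the subsolution argument of Chu--Jiao \cite{CJ21} and Jiao--Sun \cite{JS22} to the quotient setting in order to rule out degeneracy of $\lambda(\eta)$ in the normal direction. With these uniform bounds, Arzel\`a--Ascoli together with the weak continuity of $\sigma_k/\sigma_l$ under $C^{1,\alpha}$ convergence produces the limit solution $u\in C^{1,1}(\overline\Omega)$, and uniqueness follows from the comparison principle for admissible functions.
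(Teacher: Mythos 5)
Your overall strategy is the same as the paper's: regularize via $\psi_\epsilon^{1/(k-l)}=\psi^{1/(k-l)}+\epsilon$, solve the non-degenerate problems by the continuity method, prove uniform $C^{1,1}$ a priori bounds depending only on $\|\psi^{1/(k-l)}\|_{C^{1,1}}$, and pass to the limit. However, the proposal diverges from the paper in the technical details and, more importantly, glosses over where the real work lies. For the interior $C^2$ estimate the paper does not use a Pogorelov-type function $\log\lambda_{\max}(\eta)+\phi(|\nabla u|^2)+A(u-\underline u)$; instead it works with $\log\kappa_{\max}-\log(v-a)$ where $v=\langle\nu,\epsilon_{n+1}\rangle$ is the angle function, a test function that does not involve the subsolution at all, and whose critical-point analysis is tailored to the concave, $1$-homogeneous quotient $F=(\sigma_k/\sigma_l)^{1/(k-l)}$. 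The subsolution $\underline u$ enters only through the $C^0$ and boundary gradient bounds.

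The genuine crux of the argument, which your proposal does not identify, is the tangential--normal boundary estimate, specifically the differential inequality $LW\leq C(\psi^{1-1/(k-l)}+\psi|DW|+\sum_i G^{ii}+G^{ij}W_iW_j)$ of Lemma~\ref{im}. The paper explicitly observes (in the remark after the main theorem) that the corresponding Lemma 5.3 of Jiao--Sun exploits the special structure of the $(\eta,n)$-curvature $\det(\eta)$, and that the challenge here is to obtain the analogous bound for $\sigma_k/\sigma_l$, $k<n$, without that structure. This requires the case analysis of Lemma~\ref{im} (cases (a), (b), (b1), (b2-1), (b2-2)), and in particular the key estimate $f_r\kappa_r\leq C\psi$ in case (b2-1), which is where the $\widetilde\Gamma_k$-cone structure of $\eta$ is used most delicately. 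Your remark that the mixed estimate is ``handled by $\underline u$-barriers and tangential differentiation'' understates this. Also, your suggestion that the double-normal bound requires ``adapting the subsolution argument of Chu--Jiao and Jiao--Sun to rule out degeneracy'' is not what the paper does: since $k<n$, increasing $u_{nn}(0)$ forces $n-1$ of the entries of $\lambda(\eta)$ to grow linearly, so $\sigma_k/\sigma_l(\lambda(\eta))\to\infty$, contradicting the equation directly; no subsolution argument is needed there. These gaps do not make the outline wrong, but they miss the specific contributions that make the theorem go through.
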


%\begin{remark}
%Guan-Ren-Wang \cite{GRW15} showed that curvature estimates failed for the classical Hessian quotient type curvature equations
%$$\frac{\sigma_k(\kappa)}{\sigma_l(\kappa)}=\psi(X,\nu), \quad 2\leq k\leq n, ~0\leq l\leq k-1.$$
%However, when $\kappa$ is replaced by $\lambda(\eta)$, the a priori estimates for equation \eqref{Eq1} are feasible due to the strictly ellipticity of the operator as in \cite{CDH23}.
%\end{remark}

\begin{remark}
The results of the classical degenerate curvature equations mainly focus on the case $\psi=\psi(X)$. Compared to the classical conclusions, we consider the more general right hand function $\psi=\psi(X,\nu)$.
\end{remark}

\begin{remark}
In the proof of the key Lemma 5.3 in Jiao-Sun \cite{JS22}, the special structure of the $(\eta,n)$-curvature plays an important role. Our challenge is how to achieve the similar result in the absence of that special structure.
\end{remark}

%\begin{remark}
%In the proof of Lemma \ref{im}, the core is to derive $f_r\kappa_r\leq C\psi$ which relys on some properties of $\sigma_k$ and choosing a suitable category to discuss.
%\end{remark}

%When $l=0$, \eqref{Eq1} becomes the Dirichlet problem for a class of Hessian type curvature equations
%\begin{equation}\label{Eq2}
%\sigma_k(\lambda(\eta[M_u]))=\psi(X,\nu),
%\end{equation}
%with homogeneous boundary data.

 In particular, we only need $\psi^{\frac{1}{k-1}}\in C^{1,1}(\overline{\Omega}\times\mathbb{R}\times\mathbb{S}^n)$ to establish second order derivative estimates for solutions of equation \eqref{Eq1} when $l=0$.

%Note that the condition $\psi^{\frac{1}{k}}\in C^{1,1}(\overline{\Omega}\times\mathbb{R}\times\mathbb{S}^n)$ is little stronger than $\psi^{\frac{1}{k-1}}\in C^{1,1}(\overline{\Omega}\times\mathbb{R}\times\mathbb{S}^n)$. The counterexamples of Wang \cite{W95} showed that the condition $\psi^{\frac{1}{k-1}}\in C^{1,1}(\overline{\Omega}\times\mathbb{R}\times\mathbb{S}^n)$ is optimal.

\begin{theorem}\label{2}
Let $\Omega$ be a uniformly $k$-convex bounded domain in $\mathbb{R}^n$ with $\partial\Omega\in C^{2,1}$. Suppose $2\leq k<n$, $\psi\geq 0$, $\psi^{\frac{1}{k-1}}\in C^{1,1}(\overline{\Omega}\times\mathbb{R}\times\mathbb{S}^n)$. Let $u\in C^4(\Omega)\cap C^2(\overline{\Omega})$ be an admissible solution of Dirichlet problem
\begin{equation}\label{Eq2}
  \left\{
\begin{aligned}
&\sigma_k(\lambda(\eta[M_u]))=\psi(X,\nu), &&in~\Omega,\\
&u = 0, &&on~\partial \Omega.
\end{aligned}
\right.
\end{equation}
Then there exists a positive constant $C$ depending on $n,k,\|u\|_{C^1}$ and $\|\psi^{\frac{1}{k-1}}\|_{C^{1,1}}$ satisfying
$$\sup_{\overline{\Omega}}|D^2u|\leq C.$$
\end{theorem}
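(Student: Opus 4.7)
The plan is to prove the $C^2$ bound by controlling $\max_{\overline{\Omega}}\lambda_{\max}(\eta[M_u])$, first on $\partial\Omega$ and then in the interior. Since $\|u\|_{C^1}$ is bounded by hypothesis and $\lambda(\eta)$ depends on $D^2u$ through an invertible linear transformation modulo $Du$-terms, such a bound immediately yields $\sup_{\overline{\Omega}}|D^2u|\le C$.

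For the \textbf{boundary estimate}, I would proceed along classical Caffarelli-Nirenberg-Spruck / Ivochkina-Lin-Trudinger lines. Flatten $\partial\Omega$ locally at a point $x_0$. Pure tangential second derivatives $u_{\alpha\beta}$ ($\alpha,\beta<n$) are controlled directly by $u|_{\partial\Omega}=0$ together with $\partial\Omega\in C^{2,1}$. The mixed tangential-normal derivatives $u_{\alpha n}(x_0)$ are estimated via a linear barrier whose construction uses the uniform $k$-convexity of $\partial\Omega$ to guarantee the correct sign of the linearized operator acting on the barrier. The double-normal derivative $u_{nn}(x_0)$ is then extracted from the equation $\sigma_k(\lambda(\eta))=\psi$ itself: once the tangential and mixed components are bounded, the $k$-convex geometry of $\partial\Omega$ furnishes a positive lower bound on the relevant cofactor, producing an upper bound on $u_{nn}$. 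The degeneracy of $\psi$ causes no trouble at this stage, since only upper bounds are needed and the barrier arguments do not involve derivatives of $\psi$.

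For the \textbf{interior estimate}, following the strategy of Jiao-Wang \cite{JW22} and Jiao-Jiao \cite{Jiao22}, I rewrite the equation in the concave form
$$F(\lambda(\eta)):=\sigma_k^{1/(k-1)}(\lambda(\eta))=\psi^{1/(k-1)}(X,\nu),$$
so that the right-hand side is $C^{1,1}$ in $(X,\nu)$ by hypothesis. Consider a Pogorelov-type auxiliary function
$$W(X,\xi)=\log\bigl(\eta_{ij}(X)\xi^i\xi^j\bigr)+a\,\varphi(|Du|^2)+b(-u),$$
on $\overline{\Omega}\times\mathbb{S}^{n-1}$ with constants $a,b>0$ to be chosen. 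Assume the maximum is attained at an interior point $X_0$ with $\xi=e_1$; work in a local orthonormal frame on $M_u$ that diagonalizes $\eta$ at $X_0$. Applying $F^{ii}\partial_i\partial_i$ to $W$, using the concavity of $F$ on $\widetilde{\Gamma}_k$, the Ricci-type commutators relating $\eta_{ii,11}$ to the ambient covariant derivatives of $h_{ij}$, and the uniform bound $|D^2_{(X,\nu)}\psi^{1/(k-1)}|\le C$, I obtain a differential inequality whose good second-order terms generated by $a\,\varphi(|Du|^2)+b(-u)$ dominate the bad ones provided $a$ and $b$ are chosen sufficiently large depending on the data. This forces $\lambda_{\max}(\eta)(X_0)$ to be bounded by a constant with the stated dependence.

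The \textbf{main obstacle} is precisely the degeneracy of the equation: on $\{\psi=0\}$ the linearized operator $F^{ij}$ degenerates, and naive second differentiation of $\sigma_k(\lambda(\eta))=\psi$ produces terms such as $\psi_{11}/\psi$ that are not a priori bounded. The role of the hypothesis $\psi^{1/(k-1)}\in C^{1,1}$ is exactly to tame these: after rescaling to $\sigma_k^{1/(k-1)}=\psi^{1/(k-1)}$, the right-hand side acquires genuine $C^{1,1}$ regularity, and the bad quotients are absorbed into the bounded second derivative of $\psi^{1/(k-1)}$. Careful chain-rule accounting through the Gauss map $\nu=\nu(Du)$ and the combinatorics of expanding $\sigma_k(\lambda(H\delta-h))$ must be carried out cleanly, but the structural bound on $\psi^{1/(k-1)}$ is what ultimately closes the estimate independently of how small $\psi$ becomes.
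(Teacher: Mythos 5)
Your boundary outline is roughly in the right spirit, but the interior step contains a genuine structural error, and your claim about the boundary barriers being insensitive to $\psi$-derivatives is also wrong.

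\textbf{Interior estimate.} You propose to rewrite the equation as $\sigma_k^{1/(k-1)}(\lambda(\eta))=\psi^{1/(k-1)}$ and then exploit ``the concavity of $F$ on $\widetilde\Gamma_k$'' together with a test function of the form $\log\bigl(\eta_{ij}\xi^i\xi^j\bigr)+\cdots$. The problem is that $\sigma_k^{1/(k-1)}$ is \emph{not} concave on $\Gamma_k$ --- only $\sigma_k^{1/k}$ and $(\sigma_k/\sigma_1)^{1/(k-1)}$ are. Using $\sigma_k^{1/k}$ would force the hypothesis $\psi^{1/k}\in C^{1,1}$, which is strictly stronger than the assumed $\psi^{1/(k-1)}\in C^{1,1}$. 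To work with the weaker hypothesis, the paper replaces concavity by the estimate in Proposition~\ref{27}, derived from the concavity of $(\sigma_k/\sigma_1)^{1/(k-1)}$:
\begin{equation*}
\sum_s\sigma_k^{ij,pq}\nabla_sh_{ij}\nabla_sh_{pq}\leq(1-\alpha)\frac{|\nabla\psi|^2}{\psi}+2\alpha\frac{\langle\nabla\psi,\nabla H\rangle}{H}-(1+\alpha)\frac{\psi|\nabla H|^2}{H^2},\qquad\alpha=\tfrac{1}{k-1}.
\end{equation*}
Notice that the right-hand side involves $\nabla H$ in a way that does not cancel against a $\log\lambda_{\max}(\eta)$-type test function. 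That is precisely why the paper uses $\overline Q=\log H-\log(v-a)$, with $H$ the mean curvature, so that the $|\nabla H|^2/H^2$ terms match up and the cross term $\langle\nabla\psi,\nabla H\rangle/H$ is absorbed using $\psi^{1/(k-1)}\in C^{1,1}$. Your $\log(\eta_{ij}\xi^i\xi^j)$ function does not see $\nabla H$ in the right form, so the cancellation fails. This is the key technical idea you are missing.

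\textbf{Boundary estimate.} Your statement that ``the barrier arguments do not involve derivatives of $\psi$'' is incorrect. The crucial Lemma~\ref{im} requires $\psi^{1/(k-l)}\in C^1$ to obtain $LW\le C(\psi^{1-1/(k-l)}+\psi|DW|+\sum G^{ii}+G^{ij}W_iW_j)$; the term $\psi^{1-1/(k-l)}$ comes from differentiating the equation once and bounding $|D\psi|\le C\psi^{1-1/(k-l)}$. Moreover, for $l=0$ the naive concavity lower bound gives $G^{ij}(D^2v-\eta_0 I)_{ij}\gtrsim\psi^{1-1/k}$, which is not strong enough to dominate the $\psi^{1-1/(k-1)}$ error terms arising from $L\widetilde W$ when $\psi$ is small. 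The paper closes this gap by splitting into $H\le1$ and $H>1$: the small-$H$ case is handled directly from $|\widetilde a_{ij}|\le\widetilde CH$, and in the large-$H$ case the Maclaurin-type inequality $\sigma_{k-1}\ge c_0\,\sigma_k^{1-1/(k-1)}\sigma_1^{1/(k-1)}$ (inequality~\eqref{a1}) supplies the missing power of $H$ to upgrade $G^{ij}d_id_j$ to the correct scale $\psi^{1-1/(k-1)}$. This case split is essential and entirely absent from your proposal.
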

In fact, the condition $\psi^{\frac{1}{k-1}}\in C^{1,1}(\overline{\Omega}\times\mathbb{R}\times\mathbb{S}^n)$ is optimal to derive the a priori estimates, which is consist with classical case based on Wang's counterexamples in \cite{W95}. But up to now, we can only derive $C^1$ estimates under the stronger condition $\psi^{\frac{1}{k}}\in C^{1,1}(\overline{\Omega}\times\mathbb{R}\times\mathbb{S}^n)$, it is still an open problem for Dirichlet problem \eqref{Eq2} to derive $C^1$ estimates under the condition $\psi^{\frac{1}{k-1}}\in C^{1,1}(\overline{\Omega}\times\mathbb{R}\times\mathbb{S}^n)$.

The organization of the paper is as follows. Some preliminaries are given in Section 2. We obtain $C^1$ estimates in Section 3. In Section 4, we establish the global second order derivative estimates. In Section 5, we deal with the boundary second order derivative estimates and complete the proof of Theorem \ref{main}, Theorem \ref{2}.
\section{Preliminaries}
\subsection{$k$-th elementary symmetric functions}
Let $\lambda=(\lambda_1,\dots,\lambda_n)\in\mathbb{R}^n$, then we recall
the definition of elementary symmetric function for $1\leq k\leq n$
\begin{equation*}
\sigma_k(\lambda)= \sum _{1 \le i_1 < i_2 <\cdots<i_k\leq
n}\lambda_{i_1}\lambda_{i_2}\cdots\lambda_{i_k}.
\end{equation*}

\begin{definition}
For $1\leq k\leq n$, let $\Gamma_k$ be a cone in $\mathbb{R}^n$ determined by
\begin{equation}\label{gar}
  \Gamma_k  = \{ \lambda  \in \mathbb{R}^n :\sigma _i (\lambda ) >
0,~\forall~ 1 \le i \le k\}.
\end{equation}
%A function $u\in C^2(\mathbb{S}^n)$ is called admissible if $W=(u_{ij}+u\delta_{ij})$ is in $\Gamma_k$ for each $x\in \mathbb{S}^n$.
\end{definition}

Denote $\sigma_{k-1}(\lambda|i)=\frac{\partial
\sigma_k}{\partial \lambda_i}$ and
$\sigma_{k-2}(\lambda|ij)=\frac{\partial^2 \sigma_k}{\partial
\lambda_i\partial \lambda_j}$. In the following, we list some algebraic equalities and inequalities of $\sigma_k$. Then we introduce some important properties which will be used later. %more details can see \cite{CNS85, CDH23, Ger06, Hui99, Li96, LTR94, S05}.

\begin{equation}\label{a0}
\sum_{i=1}^{n}\sigma_{k-1}(\lambda|i)=(n-k+1)\sigma_{k-1}(\lambda),
\end{equation}
\begin{equation}\label{a1}
  \sigma_{k-1}(\lambda)\geq c_0\sigma_k^{1-\frac{1}{k-1}}(\lambda)\sigma_1^{\frac{1}{k-1}(\lambda)},
\end{equation}
and
\begin{equation*}
  \sigma_1(\lambda)\geq c_0\sigma_k^{\frac{1}{k}}(\lambda),
\end{equation*}
for any $\lambda\in \Gamma_k$ and some positive constant $c_0$ depending on $n$ and $k$.

\begin{proposition}\label{sigma}
Let $\lambda=(\lambda_1,\dots,\lambda_n)\in\mathbb{R}^n$ and $1\leq
k\leq n$, then we have

(1) $\Gamma_1\supset \Gamma_2\supset \cdot\cdot\cdot\supset
\Gamma_n$;

(2) $\sigma_{k-1}(\lambda|i)>0$ for $\lambda \in \Gamma_k$ and
$1\leq i\leq n$;

(3) $\sigma_k(\lambda)=\sigma_k(\lambda|i)
+\lambda_i\sigma_{k-1}(\lambda|i)$ for $1\leq i\leq n$;

(4)
$\sum_{i=1}^{n}\frac{\partial\left[\frac{\sigma_{k}}{\sigma_{l}}\right]^{\frac{1}{k-l}}}
{\partial \lambda_i}\geq \left[\frac{C^k_n}{C^l_n}\right]^{\frac{1}{k-l}}$ for
$\lambda \in \Gamma_{k}$ and $0\leq l<k$;

(5) $\Big[\frac{\sigma_k}{\sigma_l}\Big]^{\frac{1}{k-l}}$ are
concave in $\Gamma_k$ for $0\leq l<k$;

(6) If $\lambda_1\geq \lambda_2\geq \cdot\cdot\cdot\geq \lambda_n$,
then $\sigma_{k-1}(\lambda|1)\leq \sigma_{k-1}(\lambda|2)\leq
\cdot\cdot\cdot\leq \sigma_{k-1}(\lambda|n)$ for $\lambda \in
\Gamma_k$.

%(7)
%$\sum_{i=1}^{n}\sigma_{k-1}(\lambda|i)=(n-k+1)\sigma_{k-1}(\lambda)$;
\end{proposition}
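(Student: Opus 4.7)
The six assertions split naturally by difficulty: (1), (3), (6) are immediate combinatorial consequences of the definitions; (2) is a classical positivity fact from Gårding's theory; (5) is the deep concavity statement at the heart of the proposition; and (4) is a consequence of (5) plus Euler's identity. My plan is to dispose of the easy items first and then invest the real work in (5), from which (4) will drop out.

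The plan for the easy items is as follows. For (1), if $\lambda\in\Gamma_k$ then by definition $\sigma_i(\lambda)>0$ for all $1\le i\le k$, and in particular for all $1\le i\le k-1$, so $\lambda\in\Gamma_{k-1}$. For (3), I split the sum defining $\sigma_k(\lambda)$ over whether the index $i$ appears in $\{i_1,\dots,i_k\}$: the terms containing $i$ contribute $\lambda_i\sigma_{k-1}(\lambda|i)$ and the rest give $\sigma_k(\lambda|i)$. For (6), I use the identity
\begin{equation*}
\sigma_{k-1}(\lambda|j)-\sigma_{k-1}(\lambda|i)=(\lambda_i-\lambda_j)\,\sigma_{k-2}(\lambda|ij),
\end{equation*}
which is immediate from (3) applied once in the $i$ and once in the $j$ variable. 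The factor $\sigma_{k-2}(\lambda|ij)$ is nonnegative on $\Gamma_k$ by (2) applied to the restricted $(n-2)$-tuple, so if $\lambda_i\ge\lambda_j$ then $\sigma_{k-1}(\lambda|j)\ge\sigma_{k-1}(\lambda|i)$.

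For (2), I would argue by induction on $k$: the $k=1$ case is trivial, and for the inductive step I show that if $\lambda\in\Gamma_k$ then $\lambda_{(i)}:=(\lambda_1,\dots,\widehat{\lambda_i},\dots,\lambda_n)\in\Gamma_{k-1}^{(n-1)}$, so that the inductive hypothesis yields $\sigma_{k-1}(\lambda_{(i)})=\sigma_{k-1}(\lambda|i)>0$. The containment $\lambda_{(i)}\in\Gamma_{k-1}^{(n-1)}$ is a standard connectedness argument on the Gårding cone, using that $\Gamma_k$ is the connected component of $\{\sigma_k>0\}$ containing the positive ray, and that the projection is continuous along a path joining $\lambda$ to $(1,\dots,1)$ inside $\Gamma_k$.

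The main obstacle is (5), the concavity of $(\sigma_k/\sigma_l)^{1/(k-l)}$ on $\Gamma_k$. I would follow the Gårding/hyperbolic polynomial approach: $\sigma_k$ is a hyperbolic polynomial of degree $k$ with respect to the direction $(1,\dots,1)$, whose positivity cone is exactly $\Gamma_k$, so by Gårding's theorem $\sigma_k^{1/k}$ is concave on $\Gamma_k$. The generalization to the quotient is the Lin–Trudinger/Marcus–Lopes inequality, which can be reduced to the Newton–Maclaurin inequalities
\begin{equation*}
\Bigl(\sigma_k/\tbinom{n}{k}\Bigr)^{1/k}\le\Bigl(\sigma_l/\tbinom{n}{l}\Bigr)^{1/l}
\end{equation*}
combined with a direct bordered-Hessian computation showing that the Hessian of $\log(\sigma_k/\sigma_l)$ is negative semidefinite on the tangent space of level sets, from which concavity of the $(k-l)$-th root follows by a standard homogeneity rearrangement. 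With (5) in hand, (4) is short: the function $f(\lambda):=(\sigma_k/\sigma_l)^{1/(k-l)}$ is positively $1$-homogeneous on $\Gamma_k$, so Euler's identity gives $\sum_i f_{\lambda_i}(\lambda)\lambda_i=f(\lambda)$, and concavity gives
\begin{equation*}
f(\mathbf{1})\le f(\lambda)+\sum_i f_{\lambda_i}(\lambda)(1-\lambda_i)=\sum_i f_{\lambda_i}(\lambda).
\end{equation*}
Since $f(\mathbf{1})=(\tbinom{n}{k}/\tbinom{n}{l})^{1/(k-l)}$, this is the claimed lower bound.
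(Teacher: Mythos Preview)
Your proposal is correct, and in fact goes well beyond what the paper does. The paper's own ``proof'' consists solely of bibliographic pointers: it declares all six items well known, citing Lieberman's book and Huisken--Sinestrari for (1), (2), (3), (6), Gerhardt for (4), and Caffarelli--Nirenberg--Spruck together with Lieberman for (5). You, by contrast, supply self-contained arguments: the combinatorial identities for (1), (3), (6); the restriction/connectedness induction for (2); the G{\aa}rding hyperbolic-polynomial route to (5); and the clean derivation of (4) from (5) via the tangent-plane inequality for a concave, $1$-homogeneous function combined with Euler's identity. That last argument is exactly the standard one and matches what one finds in the cited sources, so it is more informative than the paper's bare citation. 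The only place where your sketch is itself somewhat telegraphic is (5), where you invoke a ``bordered-Hessian computation'' without detail; but since the paper does not attempt a proof of (5) either, this is not a defect relative to what you are being compared against.
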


\begin{proof}
All the properties are well known. For example, see Chapter XV in
\cite{Li96} or \cite{Hui99} for proofs of (1), (2), (3) and (6); see Lemma 2.2.19 in \cite{Ger06} for the proof of (4); see
\cite{CNS85} and \cite{Li96} for the proof of (5).
\end{proof}

\begin{proposition}
Suppose $W=\{W_{ij}\}$ is diagonal and $m~(1\leq m\leq n)$ is a positive integer, then
\begin{eqnarray*}
\frac{\partial \sigma_m(W)}{\partial W_{ij}}=
  \begin{cases}
  \sigma_{m-1}(W|i), ~&i=j,\\
  0, ~~~&otherwise.
  \end{cases}
\end{eqnarray*}
\begin{eqnarray*}
\frac{\partial^2\sigma_m(W)}{\partial W_{ij}\partial W_{pq}}=
\begin{cases}
\sigma_{m-2}(W|ip), ~~~&i=j,p=q,i\neq p,\\
-\sigma_{m-2}(W|ip), ~~~&i=q,j=p,i\neq j,\\
0,~~~&\mbox{otherwise}.
\end{cases}
\end{eqnarray*}
\end{proposition}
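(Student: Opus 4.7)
My plan is to start from the universal representation
$$\sigma_m(W) = \frac{1}{m!}\,\delta^{j_1\cdots j_m}_{i_1\cdots i_m}\,W_{i_1 j_1}\cdots W_{i_m j_m},$$
where $\delta^{j_1\cdots j_m}_{i_1\cdots i_m}$ is the generalized Kronecker delta (the sign of the permutation sending $(i_1,\dots,i_m)$ to $(j_1,\dots,j_m)$ when these two tuples coincide as sets, and zero otherwise). This is equivalent to writing $\sigma_m(W)$ as the sum of the principal $m\times m$ minors of $W$, and it reduces to the usual $\sigma_m(\lambda_1,\dots,\lambda_n)$ on diagonal $W$. All the claimed derivative formulas will follow purely by differentiating this polynomial expression and then imposing the diagonal condition.

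For the first derivative, differentiation gives
$$\frac{\partial\sigma_m(W)}{\partial W_{ij}} = \frac{1}{(m-1)!}\,\delta^{j\, j_2\cdots j_m}_{i\, i_2\cdots i_m}\,W_{i_2 j_2}\cdots W_{i_m j_m}.$$
After setting $W$ diagonal, the product $W_{i_2 j_2}\cdots W_{i_m j_m}$ forces $i_r=j_r$ for all $r\ge 2$; the delta then equals $\pm 1$ only when the upper and lower multisets agree, which forces $j=i$. In that case the delta collapses to $+1$ and the surviving sum is exactly $\sigma_{m-1}(W|i)$. When $i\ne j$, no choice of the remaining indices can make both multisets agree, so the derivative is zero.

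For the second derivative I would differentiate once more, obtaining (on diagonal $W$)
$$\frac{\partial^2\sigma_m}{\partial W_{ij}\,\partial W_{pq}} = \frac{1}{(m-2)!}\,\delta^{j\, q\, i_3\cdots i_m}_{i\, p\, i_3\cdots i_m}\,W_{i_3 i_3}\cdots W_{i_m i_m},$$
and then check which $(i,j,p,q)$ configurations survive. The multiset condition leaves exactly two possibilities: either $i=j$ and $p=q$ with $i\ne p$, in which case the delta is the sign of the identity $(+1)$, giving $+\sigma_{m-2}(W|ip)$; or $i=q$ and $j=p$ with $i\ne j$, in which case the permutation is a transposition of the first two slots, contributing a factor of $-1$ and giving $-\sigma_{m-2}(W|ij)$. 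Every other configuration makes the multiset match impossible and the delta vanishes.

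The whole argument is essentially bookkeeping, so there is no real obstacle; the only point that deserves care is tracking the sign from the transposition in the second case, which is precisely what produces the minus sign in the $i=q,\ j=p$ branch of the stated formula.
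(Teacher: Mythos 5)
The paper states this proposition without proof; it is a standard computational fact about the extension of $\sigma_m$ to matrices (as the sum of principal $m\times m$ minors). Your argument via the generalized Kronecker delta representation
$\sigma_m(W) = \tfrac{1}{m!}\,\delta^{j_1\cdots j_m}_{i_1\cdots i_m}\,W_{i_1 j_1}\cdots W_{i_m j_m}$
is correct: differentiating and then imposing diagonality forces the remaining indices to pair up, the delta kills every configuration where the upper and lower index multisets fail to match, and the two surviving cases carry the signs of the identity and of a transposition respectively. One small point worth stating explicitly is that the delta is $\det(\delta^{j_a}_{i_b})$, so it already vanishes whenever either index tuple has a repeat; this is what rules out the $i=j=p=q$ diagonal second derivative and the $\partial^2/\partial W_{ij}^2$ case with $i\neq j$, both of which must indeed give zero (the first because $\sigma_m$ is affine in each diagonal entry, the second by direct inspection of the minor expansion). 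An equally valid and perhaps more elementary alternative would be to write $\sigma_m(W)=\sum_{|S|=m}\det W_{S,S}$ directly and apply the cofactor formula $\partial\det/\partial W_{ij}=\mathrm{cof}_{ij}$ to each minor; your delta bookkeeping is just a compact way of organizing the same computation, and it has the advantage of making the sign in the $i=q,\ j=p$ branch transparent.
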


The generalized Newton-MacLaurin inequality is as follows.
\begin{proposition}\label{NM}
For $\lambda \in \Gamma_k$ and $n\geq k > l \geq 0$, $n\geq r > s \geq 0$, $k\geq r$, $l \geq s$, we have
\begin{align}
\Bigg[\frac{{\sigma _k (\lambda )}/{C_n^k }}{{\sigma _l (\lambda
)}/{C_n^l }}\Bigg]^{\frac{1}{k-l}} \le \Bigg[\frac{{\sigma _r
(\lambda )}/{C_n^r }}{{\sigma _s (\lambda )}/{C_n^s
}}\Bigg]^{\frac{1}{r-s}}. \notag
\end{align}
\end{proposition}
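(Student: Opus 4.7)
The plan is to reduce the claim to two classical ingredients: the log-concavity of the normalized Maclaurin means $p_j:=\sigma_j(\lambda)/C_n^j$ as a function of the integer index $j$, and the monotonicity of chord slopes of a concave sequence. Setting $f(j):=\log p_j$, the desired inequality becomes, after taking logarithms,
\[
\frac{f(k)-f(l)}{k-l}\;\le\;\frac{f(r)-f(s)}{r-s}.
\]
Because $\lambda\in\Gamma_k$ forces $\sigma_j(\lambda)>0$ for $0\le j\le k$ by the very definition \eqref{gar}, the sequence $f$ is real-valued on the discrete interval $\{0,1,\ldots,k\}$, which contains all the indices $s,l,r,k$ entering the inequality.

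Next I would invoke the classical Newton inequality for elementary symmetric polynomials,
\[
p_j^{\,2}\;\ge\;p_{j-1}\,p_{j+1},\qquad j=1,\ldots,k-1,
\]
which holds for arbitrary real $\lambda$ and, in the positive range guaranteed by $\lambda\in\Gamma_k$, translates after taking logarithms into the discrete concavity $2f(j)\ge f(j-1)+f(j+1)$ for $j=1,\ldots,k-1$. This is exactly the statement that $f$ is concave on $\{0,1,\ldots,k\}$.

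For any concave sequence, chord slopes are separately decreasing in each endpoint: for $a\le a'<b$ the slope from $a'$ to $b$ does not exceed the slope from $a$ to $b$, and symmetrically for the right endpoint. I would then apply this twice, through the intermediate pair $(s,k)$, which is admissible since $s<r\le k$ ensures $s<k$:
\[
\frac{f(k)-f(l)}{k-l}\;\le\;\frac{f(k)-f(s)}{k-s}\;\le\;\frac{f(r)-f(s)}{r-s},
\]
the first step using $s\le l$ (moving the left endpoint leftward) and the second using $r\le k$ (moving the right endpoint leftward). Exponentiating yields the stated inequality.

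The main obstacle is essentially bookkeeping: one must track the hypotheses $k\ge r$ and $l\ge s$ to pick the correct direction of each monotonicity step, and one must verify that log-concavity is available on the entire range $\{0,\ldots,k\}$, which is precisely what $\lambda\in\Gamma_k$ delivers. Since both ingredients—Newton's inequality for elementary symmetric polynomials and the chord-slope monotonicity of concave sequences—are classical, I do not anticipate any substantive difficulty beyond these routine checks.
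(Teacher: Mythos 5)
Your proof is correct. The paper itself does not prove this proposition but simply cites Spruck's lecture notes \cite{S05}, where the generalized Newton--MacLaurin inequality is derived in essentially the same way: Newton's inequality $p_j^2\ge p_{j-1}p_{j+1}$ for the normalized means, positivity of $p_0,\dots,p_k$ on $\Gamma_k$, and a chord-slope argument for the log-concave sequence. Your write-up is a clean self-contained version of that standard route, and the bookkeeping with the intermediate pair $(s,k)$ (legitimate since $s<r\le k$) together with the two monotonicity steps ($s\le l$ moves the left endpoint left; $r\le k$ moves the right endpoint left) is exactly right.
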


\begin{proof}
See \cite{S05}.
\end{proof}

Next, we introduce a new cone $\widetilde{\Gamma}_k$ defined by \eqref{wid} and list some important properties, more details can refer to \cite{CDH23}.
%As in \cite{CDH23}, we will introduce a new cone $\widetilde{\Gamma}_k$ defined by \eqref{wid} and list some important properties.
\begin{proposition}
The following properties hold.

(1) $\widetilde{\Gamma}_k$ are convex cones and
\begin{equation}\label{px}
  \Gamma_1=\widetilde{\Gamma}_1\supset\widetilde{\Gamma}_2\supset\cdots\supset\widetilde{\Gamma}_n\supset\Gamma_2.
\end{equation}

(2) If $\lambda=(\lambda_1,\cdots,\lambda_n)\in \widetilde{\Gamma}_k$, then
$$\frac{\partial\left[\frac{\sigma_k(\eta)}{\sigma_l(\eta)}\right]}{\partial\lambda_i}\geq\frac{n(k-l)}{k(n-l)}\sum_{p\neq i}\frac{\sigma_{k-1}(\eta|p)\sigma_l(\eta|p)}{\sigma_l(\eta)^2},$$
for any $i=1,2,\cdots,n$, where $0\leq l<k\leq n$.

(3) If $\lambda=(\lambda_1,\cdots,\lambda_n)\in \widetilde{\Gamma}_k$, then $\left[\frac{\sigma_k(\eta)}{\sigma_l(\eta)}\right]^{\frac{1}{k-l}} (0\leq l<k\leq n)$ are concave with respect to $\lambda$. Hence for any $(\xi_1,\cdots,\xi_n)$, we have
\begin{equation}\label{pro3}
  \sum_{i,j}\frac{\partial^2\left[\frac{\sigma_k(\eta)}{\sigma_l(\eta)}\right]}{\partial \lambda_i\partial \lambda_j}\xi_i\xi_j\leq\left(1-\frac{1}{k-l}\right)\frac{\left[\sum_i\frac{\partial\left(\frac{\sigma_k(\eta)}{\sigma_l(\eta)}\right)}{\partial \lambda_i}\xi_i\right]^2}{\frac{\sigma_k(\eta)}{\sigma_l(\eta)}}.
\end{equation}
\end{proposition}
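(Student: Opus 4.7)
The unifying observation is that the map $L\colon\lambda\mapsto\eta(\lambda)$ with $\eta_p=\sum_{j\neq p}\lambda_j$ is a bijective linear automorphism of $\mathbb{R}^n$, so $\widetilde{\Gamma}_k$ is precisely $L^{-1}(\Gamma_k)$. My plan is to pull each of the three items back from known facts about $\Gamma_k$ and $\sigma_k/\sigma_l$ via the chain rule. For \eqref{px}, convexity of $\widetilde{\Gamma}_k$ is inherited immediately from convexity of $\Gamma_k$ through the linear $L$. The identity $\sigma_1(\eta)=(n-1)\sigma_1(\lambda)$ identifies $\widetilde{\Gamma}_1$ with $\Gamma_1$; the chain $\widetilde{\Gamma}_{k+1}\subset\widetilde{\Gamma}_k$ is tautological from the definition; and for $\Gamma_2\subset\widetilde{\Gamma}_n$ I would argue that $\lambda\in\Gamma_2$ yields $\sum_i\lambda_i^2=\sigma_1^2-2\sigma_2<\sigma_1^2$, hence $|\lambda_p|<\sigma_1(\lambda)$ and therefore $\eta_p>0$ for every $p$, placing $\eta$ in the positive orthant $\Gamma_n$.

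For part (2) the plan is to apply the chain rule $\partial_{\lambda_i}(\sigma_k/\sigma_l)=\sum_{p\neq i}\partial_{\eta_p}(\sigma_k/\sigma_l)$ and then evaluate each $\eta_p$-derivative using the standard identity $\sigma_m(\eta)=\sigma_m(\eta|p)+\eta_p\sigma_{m-1}(\eta|p)$. After cancellation the numerator of the quotient derivative reduces to $\sigma_{k-1}(\eta|p)\sigma_l(\eta|p)-\sigma_k(\eta|p)\sigma_{l-1}(\eta|p)$, and the asserted bound becomes the Newton--MacLaurin-type inequality $\sigma_k(\eta|p)\sigma_{l-1}(\eta|p)\leq\tfrac{l(n-k)}{k(n-l)}\sigma_{k-1}(\eta|p)\sigma_l(\eta|p)$ on the $(n-1)$-tuple $\eta|p$; the binomial ratios coming from Proposition \ref{NM} produce exactly the coefficient $l(n-k)/(k(n-l))$.

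For part (3) I would obtain concavity of $(\sigma_k/\sigma_l)^{1/(k-l)}$ on $\widetilde{\Gamma}_k$ directly from concavity on $\Gamma_k$ (Proposition \ref{sigma} (5)) composed with the linear map $L$. Inequality \eqref{pro3} is then the algebraic restatement: writing $F=\sigma_k/\sigma_l$ and $G=F^{1/(k-l)}$, a direct differentiation shows that $G''(\xi,\xi)\leq 0$ rearranges to $F''(\xi,\xi)\leq\bigl(1-\tfrac{1}{k-l}\bigr)(F'(\xi))^2/F$, which is exactly \eqref{pro3}.

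The step I expect to be the main obstacle is justifying the Newton--MacLaurin inequality on $\eta|p$ in part (2): the hypothesis $\eta\in\Gamma_k$ does not automatically place $\eta|p$ in $\Gamma_k(\mathbb{R}^{n-1})$, so the intermediate positivities needed to chain Newton's $P_m^2\geq P_{m-1}P_{m+1}$ must be verified separately. I would handle this by combining the unconditional form of Newton's inequality with Proposition \ref{sigma} (2) (which provides $\sigma_{k-1}(\eta|p)>0$) together with a sign analysis based on $\sigma_k(\eta|p)=\sigma_k(\eta)-\eta_p\sigma_{k-1}(\eta|p)$, so that degenerate sign configurations reduce to cases where the inequality holds automatically.
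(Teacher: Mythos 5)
The paper states this proposition without proof, deferring to \cite{CDH23}, so there is no in-paper argument to compare against; you are supplying a proof where the paper only cites one. Your plan is correct in all essentials: the observation that $L:\lambda\mapsto\eta$ is an invertible linear map (eigenvalues $n-1$ and $-1$) so that $\widetilde{\Gamma}_k=L^{-1}(\Gamma_k)$ cleanly delivers convexity and the inclusion chain; the identity $\sigma_1(\eta)=(n-1)\sigma_1(\lambda)$ gives $\widetilde{\Gamma}_1=\Gamma_1$; and your $\Gamma_2\subset\widetilde{\Gamma}_n$ argument via $\sum\lambda_i^2=\sigma_1^2-2\sigma_2<\sigma_1^2$ is exactly right. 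The chain-rule reduction in (2), the cancellation giving the numerator $\sigma_{k-1}(\eta|p)\sigma_l(\eta|p)-\sigma_k(\eta|p)\sigma_{l-1}(\eta|p)$, and the concavity-to-\eqref{pro3} computation in (3) all check out.

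One small inaccuracy worth fixing: in (2) you invoke Proposition \ref{NM} to produce the coefficient $l(n-k)/(k(n-l))$, but that proposition is stated for $n$-tuples with ratios $C_n^m$; applied verbatim it would yield $l(n-k+1)/(k(n-l+1))$, which is not what you need. The coefficient you wrote is the correct one, but it comes from the $(n-1)$-tuple version (Newton's $P_m^2\geq P_{m-1}P_{m+1}$ with $P_m=\sigma_m(\eta|p)/C_{n-1}^m$), so you should either apply \ref{NM} with $n$ replaced by $n-1$ or, as you already propose, use the unconditional Newton inequalities directly. Your sign analysis is then exactly what is needed: since $\lambda\in\widetilde{\Gamma}_k$ puts $\eta\in\Gamma_k$, Proposition \ref{sigma}(2) guarantees $\sigma_j(\eta|p)>0$ for all $j\leq k-1$; if in addition $\sigma_k(\eta|p)>0$ the normalized ratios $P_j/P_{j-1}$ are positive and decreasing so the chain closes, and if $\sigma_k(\eta|p)\leq 0$ the target inequality holds trivially because its left side is nonpositive while its right side is nonnegative (the boundary case $k=n$ falls here automatically since $\sigma_n(\eta|p)=0$). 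With that reference corrected, the argument is complete and self-contained.
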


%For $\lambda=(\lambda_1,\cdots,\lambda_n)$, recall the notation $\eta=(\eta_1,\cdots,\eta_n)$ that $\eta_i=\sum_{j\neq i}\lambda_j$. Then we have the following basic properties. The proof can refer to Lemma 2.7 of \cite{CDH23}.
\begin{proposition}
Suppose that $\lambda=(\lambda_1,\cdots,\lambda_n)\in\widetilde{\Gamma}_k$ are ordered with $\lambda_1\geq\lambda_2\geq\cdots\geq\lambda_n$, then

(1) $\eta_1\leq\eta_2\leq\cdots\leq\eta_n$ and $\eta_{n-k+1}>0$.

(2) $\sigma_{k-1}(\eta|n-k+1)\geq c(n,k)\sigma_{k-1}(\eta),~\mbox{for}~0\leq l<k\leq n$.

(3) $\frac{\partial\left[\frac{\sigma_k(\eta)}{\sigma_l(\eta)}\right]}{\partial\eta_1}\geq
\frac{\partial\left[\frac{\sigma_k(\eta)}{\sigma_l(\eta)}\right]}{\partial\eta_2}\geq\cdots\geq
\frac{\partial\left[\frac{\sigma_k(\eta)}{\sigma_l(\eta)}\right]}{\partial\eta_n},~\mbox{for}~0\leq l<k\leq n$.

(4) $\frac{\partial\left[\frac{\sigma_k(\eta)}{\sigma_l(\eta)}\right]}{\partial\lambda_1}\leq
\frac{\partial\left[\frac{\sigma_k(\eta)}{\sigma_l(\eta)}\right]}{\partial\lambda_2}\leq\cdots\leq
\frac{\partial\left[\frac{\sigma_k(\eta)}{\sigma_l(\eta)}\right]}{\partial\lambda_n},~\mbox{for}~0\leq l<k\leq n$.

(5) For $0\leq l<k<n$, we have
\begin{equation}\label{ile}
  \frac{\partial\left[\frac{\sigma_k(\eta)}{\sigma_l(\eta)}\right]}{\partial\lambda_i}\geq c(n,k,l)\sum_i\frac{\partial\left[\frac{\sigma_k(\eta)}{\sigma_l(\eta)}\right]}{\partial\lambda_i}\geq c(n,k,l)\left[\frac{\sigma_k(\eta)}{\sigma_l(\eta)}\right]^{1-\frac{1}{k-l}},~\forall~1\leq i\leq n.
\end{equation}
\end{proposition}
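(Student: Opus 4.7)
The plan is to establish the five claims in sequence, building from the elementary ordering up to the more substantive estimate in (5). For (1), use the identity $\eta_i = \sum_{j\neq i}\lambda_j = \sigma_1(\lambda) - \lambda_i$, which reverses the ordering: $\lambda_1 \geq \cdots \geq \lambda_n$ gives $\eta_1 \leq \cdots \leq \eta_n$. Since $\lambda \in \widetilde{\Gamma}_k$ forces $\sigma_j(\eta) > 0$ for $1 \leq j \leq k$, the vector $\eta$ lies in the classical G{\aa}rding cone $\Gamma_k$, and a standard Maclaurin-type argument applied to the largest $k$ components forces $\eta_{n-k+1} > 0$.

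For (2), Proposition \ref{sigma}(6) applied to the ordered $\eta$ yields $\sigma_{k-1}(\eta|1) \geq \cdots \geq \sigma_{k-1}(\eta|n) \geq 0$. Combining the decomposition $\sigma_{k-1}(\eta) = \sigma_{k-1}(\eta|n-k+1) + \eta_{n-k+1}\sigma_{k-2}(\eta|n-k+1)$ with the positivity $\eta_{n-k+1} > 0$ from (1) and the averaging identity \eqref{a0} then gives a uniform lower bound $\sigma_{k-1}(\eta|n-k+1) \geq c(n,k)\sigma_{k-1}(\eta)$. This is the main obstacle: the delicate point is that the first few $\eta_i$'s may be negative, so one must carefully exploit the ordered structure to prevent the leading terms $\sigma_{k-1}(\eta|1), \ldots, \sigma_{k-1}(\eta|n-k)$ from dominating artificially and making $\sigma_{k-1}(\eta|n-k+1)$ relatively small.

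For (3), compute
\[
\frac{\partial}{\partial \eta_i}\!\left[\frac{\sigma_k(\eta)}{\sigma_l(\eta)}\right] = \frac{\sigma_{k-1}(\eta|i)\sigma_l(\eta) - \sigma_k(\eta)\sigma_{l-1}(\eta|i)}{\sigma_l(\eta)^2},
\]
and examine the difference between indices $i$ and $i+1$. Using $\sigma_{m-1}(\eta|i) - \sigma_{m-1}(\eta|i+1) = (\eta_{i+1} - \eta_i)\sigma_{m-2}(\eta|i,i+1)$ for $m = k,l$, one extracts a factor $\eta_{i+1} - \eta_i \geq 0$ multiplying the bracket $\sigma_{k-2}(\eta|i,i+1)\sigma_l(\eta) - \sigma_k(\eta)\sigma_{l-2}(\eta|i,i+1)$, which is nonnegative by a Newton--Maclaurin-type inequality. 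Claim (4) then follows from the chain rule $\partial/\partial\lambda_i = \sum_{j\neq i}\partial/\partial\eta_j$ arising from $\eta_j = \sigma_1(\lambda) - \lambda_j$: the telescoping gives $\partial f/\partial\lambda_{i+1} - \partial f/\partial\lambda_i = \partial f/\partial\eta_i - \partial f/\partial\eta_{i+1} \geq 0$.

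For (5), (4) identifies $\partial f/\partial\lambda_1$ as the smallest partial derivative, so it suffices to bound this term from below. Decomposing $\partial f/\partial\lambda_1 = \sum_{j\geq 2}\partial f/\partial\eta_j$, by (3) together with (2) the summands for $2 \leq j \leq n-k+1$ are each bounded below by a constant multiple of $\sigma_{k-1}(\eta)/\sigma_l(\eta)$; comparing this with $\sum_i \partial f/\partial\lambda_i = (n-1)\sum_j \partial f/\partial\eta_j$ yields the first inequality $\partial f/\partial\lambda_i \geq c(n,k,l)\sum_p \partial f/\partial\lambda_p$. Finally, applying Proposition \ref{sigma}(4) to $\eta \in \Gamma_k$ gives $\sum_j \partial f/\partial\eta_j \geq c(n,k,l)[\sigma_k(\eta)/\sigma_l(\eta)]^{1-1/(k-l)}$ (alternatively through the generalized Newton--Maclaurin inequality \eqref{a1}), which via the chain rule upgrades the sum into the second inequality of (5).
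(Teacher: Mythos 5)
The paper does not give its own proof of this proposition; it states it and cites Chen--Dong--Han \cite{CDH23}, so I can only assess your proposal on its own terms. Parts (1), (3) and (4) are essentially fine: the identity $\eta_i = \sigma_1(\lambda)-\lambda_i$ gives the reversed ordering, $\eta\in\Gamma_k$ ordered decreasingly $\eta_n\geq\cdots\geq\eta_1$ has its $k$-th largest entry $\eta_{n-k+1}>0$, the monotonicity of $\partial(\sigma_k/\sigma_l)/\partial\eta_i$ in $i$ follows cleanly from symmetry and concavity of $(\sigma_k/\sigma_l)^{1/(k-l)}$ (which is slicker than tracking the sign of the bracket $\sigma_{k-2}(\eta|i,i{+}1)\sigma_l - \sigma_k\sigma_{l-2}(\eta|i,i{+}1)$, a sign you assert without justifying), and the chain rule $\partial/\partial\lambda_i = \sum_{j\neq i}\partial/\partial\eta_j$ carries (3) to (4). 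The second inequality in (5) also checks out via Proposition~2.2(4) applied to $\eta\in\Gamma_k$.

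There are two genuine gaps. First, for (2) you explicitly flag the step as ``the main obstacle'' and then do not close it: the decomposition $\sigma_{k-1}(\eta)=\sigma_{k-1}(\eta|n{-}k{+}1)+\eta_{n-k+1}\sigma_{k-2}(\eta|n{-}k{+}1)$ together with the averaging identity \eqref{a0} does not by itself yield a uniform lower bound on $\sigma_{k-1}(\eta|n{-}k{+}1)$, because $\eta_{n-k+1}\sigma_{k-2}(\eta|n{-}k{+}1)$ is positive and there is no a priori bound keeping it from eating up almost all of $\sigma_{k-1}(\eta)$. What is actually needed is a quantitative Lin--Trudinger-type inequality for the G\aa rding cone (in decreasing order $\mu_1\geq\cdots\geq\mu_n$, $\mu\in\Gamma_k$: $\sigma_{k-1}(\mu|k)\geq \mu_1\cdots\mu_{k-1}\geq c(n,k)\sigma_{k-1}(\mu)$), and that is not a consequence of the steps you list. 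Second, in (5) you claim the summands $\partial(\sigma_k/\sigma_l)/\partial\eta_j$ for $2\leq j\leq n-k+1$ are each bounded below by $c\,\sigma_{k-1}(\eta)/\sigma_l(\eta)$ ``by (3) together with (2).'' For $l=0$ this is true since $\partial\sigma_k/\partial\eta_j=\sigma_{k-1}(\eta|j)$, but for $l\geq 1$ the quotient rule introduces a subtracted term $\sigma_k(\eta)\sigma_{l-1}(\eta|j)/\sigma_l(\eta)^2\geq 0$, and (2)--(3) only control the first summand $\sigma_{k-1}(\eta|j)/\sigma_l(\eta)$. You would need an additional quantitative Newton--Maclaurin gap (a strict version of $\sigma_{k-1}(\eta|j)\sigma_l(\eta)\geq\sigma_k(\eta)\sigma_{l-1}(\eta|j)$ with a uniform factor $>1$), or, more directly, the lower bound $\partial(\sigma_k/\sigma_l)/\partial\lambda_i \geq \frac{n(k-l)}{k(n-l)}\sum_{p\neq i}\sigma_{k-1}(\eta|p)\sigma_l(\eta|p)/\sigma_l(\eta)^2$ recorded in the preceding proposition of the paper, which sidesteps the subtraction and, combined with (2), yields the first inequality of (5) cleanly. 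As written your proposal does not establish (2) or the first half of \eqref{ile} in the general Hessian-quotient case.
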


\subsection{Basic properties of graphic hypersurface}
 Let $\phi_i=\frac{\partial\phi}{\partial x_i}$, $\phi_{ij}=\frac{\partial^2\phi}{\partial x_i\partial x_j}$, $D\phi=(\phi_1,\cdots,\phi_n)$ and $D^2\phi=\{\phi_{ij}\}$ denote the first and second order derivatives, gradient and Hessian of a function $\phi\in C^2(\Omega)$, respectively.
A graphic hypersurface $M_u$ in $\mathbb{R}^{n+1}$ is a codimension one submanifold which can be written as a graph
$$M_u=\{X=(x,u(x))|x\in\mathbb{R}^n\}.$$
Let $\{\epsilon_1,\cdots,\epsilon_{n+1}\}$ be the standard basis of $\mathbb{R}^{n+1}$, then the height function of $M_u$ is $u(x)=\langle X, \epsilon_{n+1}\rangle$. It is easy to see that the induced metric and second fundamental form of $M_u$ are given by
$$g_{ij}=\delta_{ij}+u_iu_j, \quad 1\leq i,j\leq n,$$
and
$$h_{ij}=\frac{u_{ij}}{\sqrt{1+|Du|^2}}.$$
The unit upward normal vector field to $M_u$ is
$$\nu=\frac{(-Du,1)}{\sqrt{1+|Du|^2}}.$$
By straightforward calculations, we have the principle curvature of $M_u$ are eigenvalues of the matrix
$$\frac{1}{\omega}(I-\frac{Du\otimes Du}{\omega^2})D^2u,$$
or the symmetric matrix $A[u]=\{a_{ij}\}$
$$a_{ij}=\frac{1}{\omega}\gamma^{ik}u_{kl}\gamma^{lj},$$
where $\gamma^{ik}=\delta_{ik}-\frac{u_iu_k}{\omega(1+\omega)}$ and $\omega=\sqrt{1+|Du|^2}$. Note that $\{\gamma^{ij}\}$ is invertible with inverse $\gamma_{ij}=\delta_{ij}+\frac{u_iu_j}{1+\omega}$, which is the square root of $\{g_{ij}\}$.

Let $\{e_1,\cdots,e_n\}$ be a local orthonormal frame field defined on $TM_u$ and $\nabla$ be the induced Levi-Civita connection on $M_u$. For a function $v$ on $M_u$, we denote $\nabla_iv=\nabla_{e_i}v$, $\nabla_{ij}v=\nabla^2v(e_i,e_j)$. Hence
$$|\nabla u|=\sqrt{g^{ij}u_iu_j}=\frac{|Du|}{\sqrt{1+|Du|^2}}.$$

We list some well known fundamental equations for a hypersurface $M_u$ in $\mathbb{R}^{n+1}$, which will be needed in our proof.
$$\nabla_{ij}X=h_{ij}\nu\quad(\mbox{Gauss formula}),$$
$$\nabla_i\nu=-h_{ij}e_j\quad(\mbox{Weingarten formula}),$$
$$\nabla_kh_{ij}=\nabla_jh_{ik}\quad(\mbox{Codazzi equation}),$$
$$R_{ijst}=h_{is}h_{jt}-h_{it}h_{js}\quad(\mbox{Gauss equation}),$$
where $h_{ij}=\langle\nabla_{e_i}e_j, \nu\rangle$, $R_{ijst}$ is the $(4,0)$-Riemannian curvature tensor of $M_u$ and the derivative here is covariant derivative with respect to the metric on $M_u$. Hence the Ricci identity becomes
\begin{equation}\label{Ric}
  \nabla_i\nabla_jh_{st}=\nabla_s\nabla_th_{ij}+(h_{mt}h_{sj}-h_{mj}h_{st})h_{mi}+(h_{mt}h_{ij}-h_{mj}h_{it})h_{ms}.
\end{equation}

According to Lemma 3.2 of \cite{GLL}, we state the following proposition by observing the concavity of $(\sigma_k/\sigma_1)^{\frac{1}{k-1}}$, more details see \cite{JS22, JW22}.
\begin{proposition}\label{27}
Let $\alpha=\frac{1}{k-1}$. If $u$ is an admissible solution of Dirichlet problem \eqref{Eq2}, then we get
$$\sum_s\sigma_k^{ij,pq}\nabla_sh_{ij}\nabla_sh_{pq}\leq(1-\alpha)\frac{|\nabla \psi|^2}{\psi}+2\alpha\frac{\langle\nabla \psi, \nabla H\rangle}{H}-(1+\alpha)\frac{\psi|\nabla H|^2}{H^2},$$
where $\sigma_k^{ij,pq}=\frac{\partial^2\sigma_k}{\partial h_{ij}\partial h_{pq}}$.
\end{proposition}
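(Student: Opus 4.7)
The plan is to exploit the concavity of $\bigl(\sigma_k/\sigma_1\bigr)^{1/(k-1)}$ together with the fact that $\sigma_1(\eta)=(n-1)H$ is already \emph{linear} in the second fundamental form, so that all of its second derivatives vanish. Set
\[
F(h) := \sigma_k(\lambda(\eta[h])), \qquad L(h) := \sigma_1(\lambda(\eta[h])) = (n-1)\tr h,
\]
so equation \eqref{Eq2} reads $F=\psi$. Since $\kappa[M_u]\in\widetilde\Gamma_k$, the eigenvalues of $\eta$ lie in $\Gamma_k$ by \eqref{wid}, so the spectral function $(\sigma_k/\sigma_1)^{1/(k-1)}$ is concave there by Proposition \ref{sigma}(5). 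Lifting this concavity from eigenvalues to symmetric matrices and composing with the linear map $h\mapsto \eta = (\tr h)I - h$ in an orthonormal frame, one concludes that $\Phi(h):=(F(h)/L(h))^{1/(k-1)}$ is a concave function of the symmetric matrix $h$ on the admissible set.

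Next, apply this concavity at the test direction $\xi_{ij}:=\nabla_s h_{ij}$. Differentiating \eqref{Eq2} gives $F^{ij}\nabla_s h_{ij} = \nabla_s\psi$, while the linearity of $L$ gives $L^{ij,pq}\equiv 0$ and $L^{ij}\nabla_s h_{ij} = (n-1)\nabla_s H$. Taking two logarithmic derivatives of $\log\Phi = \alpha(\log F - \log L)$ with $\alpha = 1/(k-1)$ and contracting against $\xi\otimes\xi$ produces the identity
\[
\frac{\Phi^{ij,pq}\xi_{ij}\xi_{pq}}{\alpha\,\Phi} = \frac{F^{ij,pq}\xi_{ij}\xi_{pq}}{F} - \frac{(\nabla_s\psi)^2}{F^2} + \frac{(n-1)^2(\nabla_s H)^2}{L^2} + \alpha\Bigl(\frac{\nabla_s\psi}{F} - \frac{(n-1)\nabla_s H}{L}\Bigr)^2.
\]
The concavity of $\Phi$ forces the left-hand side to be $\leq 0$; solving for $F^{ij,pq}\xi_{ij}\xi_{pq}$, substituting $F=\psi$ and $L=(n-1)H$ (at which point the $(n-1)$ factors cancel cleanly), expanding the square, and finally summing over $s$ reproduces exactly the stated inequality.

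The only non-algebraic step is the passage from the spectral concavity of $(\sigma_k/\sigma_1)^{1/(k-1)}$ on $\Gamma_k$ to concavity of $\Phi$ as a function of the symmetric matrix $h$. This follows from the standard lifting of concave symmetric functions of eigenvalues to concave spectral functions on symmetric matrices, combined with the linearity of $h\mapsto \eta$; it is precisely the observation used in Lemma 3.2 of \cite{GLL} and in the parallel computations of \cite{JS22, JW22}. Everything else is bookkeeping: crucially, no Gauss, Codazzi, or Ricci identities enter the proof of this proposition itself, as those are only invoked later when the inequality is plugged into the $C^2$ estimate.
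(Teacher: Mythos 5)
Your proposal is correct and follows exactly the route the paper indicates: the inequality is obtained from the concavity of $(\sigma_k/\sigma_1)^{1/(k-1)}$ on $\Gamma_k$ (Proposition~\ref{sigma}(5)), lifted to a concave spectral function of $h$ via the linear map $h\mapsto (\tr h)I-h$, with the linearity of $\sigma_1(\eta)=(n-1)H$ in $h$ killing the second-derivative term for the denominator; contracting the resulting log-derivative identity against $\xi=\nabla_s h$ and summing over $s$ gives precisely the stated bound. This is the content of Lemma~3.2 of \cite{GLL} as adapted in \cite{JS22,JW22}, which is exactly what the paper cites in lieu of a written proof.
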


For $p\in \mathbb{R}^n$, define
$$\Gamma_k(p):=\{r\in S^{n\times n}:\lambda(r,p):=\lambda((I-\frac{p\otimes p}{1+|p|^2})r)\in \Gamma_k\}.$$
To establish second order estimates on the boundary, we need the proposition which can be seen in \cite{IV1991,ILT1996}.
\begin{proposition}\label{prop}
For any $p\in \mathbb{R}^n$, we have $\Gamma_{k+1}(0)\subset\Gamma_k(p)$. Assume that $r\in \Gamma_{k+1}$ is an $n\times n$ matrix, then we have for $0\leq l<k<n$,
\begin{equation}\label{pro}
  \frac{\sigma_k}{\sigma_l}(\lambda(r,p))\geq\frac{1}{1+|p|^2}\frac{\sigma_k}{\sigma_l}(\lambda(r)).
\end{equation}
\end{proposition}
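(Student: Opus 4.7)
The plan is to derive an explicit algebraic formula for $\sigma_j(\lambda(r,p))$ in terms of the eigenvalues of $r$, and then to reduce both parts of the proposition to pointwise inequalities that can be handled by Proposition~\ref{sigma} and the Newton–MacLaurin inequality (Proposition~\ref{NM}).

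First I would verify $A_p=(I+pp^T)^{-1}$ by the direct computation $(I+pp^T)(I-\tfrac{pp^T}{1+|p|^2})=I$, so that $\lambda(r,p)$ is the spectrum of $(I+pp^T)^{-1}r$. Diagonalizing $r$ in its orthonormal eigenbasis $\{v_i\}$ with eigenvalues $\lambda=(\lambda_1,\dots,\lambda_n)$, I set $\alpha_i:=\langle p,v_i\rangle^2\ge 0$, so that $\sum_i\alpha_i=|p|^2$. The matrix-determinant lemma
\[
\det\bigl(t(I+pp^T)+r\bigr)=\det(tI+r)\bigl(1+t\,p^T(tI+r)^{-1}p\bigr),
\]
together with $p^T(tI+r)^{-1}p=\sum_i\alpha_i/(t+\lambda_i)$ in the eigenbasis of $r$, yields (by matching coefficients of $t^{n-j}$ and using Proposition~\ref{sigma}(3)) the master identity
\[
(1+|p|^2)\,\sigma_j(\lambda(r,p))=\sigma_j(\lambda)+\sum_{i=1}^n\alpha_i\,\sigma_j(\lambda|i),\qquad 0\le j\le n.
\]

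The inclusion $\Gamma_{k+1}(0)\subset\Gamma_k(p)$ is then immediate: for $\lambda\in\Gamma_{k+1}$ and $j\le k$, Proposition~\ref{sigma}(2) gives $\sigma_j(\lambda),\sigma_j(\lambda|i)>0$, so the master identity forces $\sigma_j(\lambda(r,p))>0$. The case $l=0$ of~\eqref{pro} is also immediate from the master identity since then $\sigma_l\equiv 1$. For $l\ge 1$, cross-multiplying~\eqref{pro} and using $|p|^2=\sum_i\alpha_i$ reduces the claim to
\[
\sum_{i=1}^n\alpha_i\bigl[\sigma_k\sigma_l+(1+|p|^2)\sigma_l\sigma_k(\lambda|i)-\sigma_k\sigma_l(\lambda|i)\bigr]\ge 0.
\]
Each bracket is monotone nondecreasing in $|p|^2$ (its coefficient $\sigma_l\sigma_k(\lambda|i)$ being nonnegative), so it suffices to verify the $|p|^2=0$ version of each; using $\sigma_l-\sigma_l(\lambda|i)=\lambda_i\sigma_{l-1}(\lambda|i)$, this collapses to
\[
\sigma_l(\lambda)\,\sigma_k(\lambda|i)+\lambda_i\,\sigma_k(\lambda)\,\sigma_{l-1}(\lambda|i)\ge 0.
\]

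I expect the main obstacle to be the case $\lambda_i<0$ (which can occur when $k+1<n$), where the two terms genuinely compete. My plan is to re-express $\sigma_k$ and $\sigma_l$ in terms of $(\lambda|i)$ via Proposition~\ref{sigma}(3), which turns the left-hand side into the quadratic
\[
Q(\lambda_i)=\sigma_l(\lambda|i)\sigma_k(\lambda|i)+2\lambda_i\,\sigma_k(\lambda|i)\sigma_{l-1}(\lambda|i)+\lambda_i^2\,\sigma_{k-1}(\lambda|i)\sigma_{l-1}(\lambda|i)
\]
in the single variable $\lambda_i$. Its leading and constant coefficients are positive, and its discriminant equals $4\sigma_k(\lambda|i)\sigma_{l-1}(\lambda|i)\bigl[\sigma_k(\lambda|i)\sigma_{l-1}(\lambda|i)-\sigma_l(\lambda|i)\sigma_{k-1}(\lambda|i)\bigr]$. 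Because $\sigma_j(\lambda|i)=\partial\sigma_{j+1}(\lambda)/\partial\lambda_i>0$ for all $j\le k$, the reduced vector $(\lambda|i)\in\mathbb{R}^{n-1}$ still lies in $\Gamma_k$ even when it leaves $\Gamma_{k+1}$, and Proposition~\ref{NM} applied to $(\lambda|i)$ yields $\sigma_k(\lambda|i)\sigma_{l-1}(\lambda|i)\le\sigma_l(\lambda|i)\sigma_{k-1}(\lambda|i)$. Hence the discriminant is nonpositive and $Q(\lambda_i)\ge 0$ for every $\lambda_i\in\mathbb{R}$, finishing the argument. The delicate point is not the algebra but this persistence of $(\lambda|i)$ in $\Gamma_k$, which is precisely what makes Newton–MacLaurin available.
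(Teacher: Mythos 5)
Your proof is correct, and it is self-contained in a way the paper's treatment is not: the paper states Proposition~\ref{prop} with a pointer to Ivochkina \cite{IV1991} and Ivochkina--Lin--Trudinger \cite{ILT1996} and gives no proof, so there is no in-paper argument to compare against line by line. Your route is the standard one behind those references: Ivochkina's representation formula
\[
(1+|p|^2)\,\sigma_j(\lambda(r,p))=\sigma_j(\lambda)+\sum_{i}\alpha_i\,\sigma_j(\lambda|i)
\]
(your ``master identity'', which you derive cleanly via the matrix-determinant lemma after verifying $(I+pp^T)^{-1}=I-\tfrac{pp^T}{1+|p|^2}$), followed by a term-by-term reduction and a Newton--MacLaurin estimate. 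The inclusion $\Gamma_{k+1}(0)\subset\Gamma_k(p)$ and the case $l=0$ drop out immediately, and your reduction of the case $l\ge 1$ to the quadratic $Q(\lambda_i)$ is correct. You also correctly locate the one genuinely delicate point: $(\lambda|i)\in\mathbb{R}^{n-1}$ lies in $\Gamma_k$ whenever $\lambda\in\Gamma_{k+1}\subset\mathbb{R}^n$, because $\sigma_j(\lambda|i)=\partial\sigma_{j+1}(\lambda)/\partial\lambda_i>0$ for all $j\le k$ by Proposition~\ref{sigma}(2); this is exactly what makes Proposition~\ref{NM} applicable to $(\lambda|i)$ and forces the discriminant of $Q$ to be nonpositive. (In fact Newton--MacLaurin on $(\lambda|i)$ gives the quantitative bound $\sigma_k(\lambda|i)\sigma_{l-1}(\lambda|i)\le \tfrac{l(n-k)}{k(n-l)}\,\sigma_{k-1}(\lambda|i)\sigma_l(\lambda|i)$ with $\tfrac{l(n-k)}{k(n-l)}<1$, so there is some slack; and the hypothesis $k<n$ is exactly what guarantees $n-1\ge k$ so that $\Gamma_k$ in $\mathbb{R}^{n-1}$ makes sense.) One cosmetic remark: rather than phrasing the step as ``each bracket is monotone nondecreasing in $|p|^2$'' --- which can read as if $|p|^2$ were a parameter independent of the $\alpha_i$ --- it is cleaner to say simply that the bracket is bounded below by its value at $|p|^2=0$ because $(1+|p|^2)\sigma_l\,\sigma_k(\lambda|i)\ge \sigma_l\,\sigma_k(\lambda|i)\ge 0$, and then conclude since each $\alpha_i\ge 0$.
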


\section{$C^1$ estimates}
In this section, we consider $C^1$ estimates for the admissible solution of Dirichlet problem \eqref{ali}. Since $M_u$ is $(\eta,k)$-convex, then $\sigma_1(\lambda(\eta[M_u]))>0$, hence $H[M_u]>0$. Combining with $\psi_z\geq 0$, the subsolution condition \eqref{sub} and the maximum principle, it is easy to derive that
$$\sup_{\overline{\Omega}}|u|+\sup_{\partial\Omega}|Du|\leq C.$$
Then we establish the global gradient estimates.
\begin{theorem}\label{c1}
Let $k\geq 2,~0\leq l<k<n$, $u\in C^3(\Omega)\cap C^1(\overline{\Omega})$ be an admissible solution of Dirichlet problem \eqref{ali}. Suppose that $\psi_z\geq 0$ and $\psi^{\frac{1}{k-l}}\in C^1(\overline{\Omega}\times\mathbb{R}\times\mathbb{S}^n)$. Then there exists a positive constant $C$ depending only on $n, k, l, \|u\|_{C^0}$ and $\|\psi^{\frac{1}{k-l}}\|_{C^1}$ such that
$$\sup_{\overline{\Omega}}|Du|\leq C(1+\sup_{\partial\Omega}|Du|).$$
\end{theorem}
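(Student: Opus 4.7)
The plan is to apply the maximum principle intrinsically on $M_u$ to an auxiliary function adapted to the graphic setting. Since a $C^0$ bound on $u$ is already at hand (from the subsolution $\underline u$ and the sign condition $\psi_z\ge 0$, as noted just before the theorem), it suffices to bound $\omega:=\sqrt{1+|Du|^2}$ on $\overline\Omega$ by its boundary supremum up to a multiplicative constant. Consider
\[
\Phi:=\log\omega+Au,
\]
with a constant $A>0$ to be chosen large in terms of $n,k,l,\|u\|_{C^0}$ and $\|\psi^{\frac{1}{k-l}}\|_{C^1}$. If $\Phi$ attains its maximum on $\partial\Omega$, then $\log\omega(x)\le\log(\sup_{\partial\Omega}\omega)+2A\|u\|_{C^0}$ for every $x$, which gives the estimate. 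We therefore assume $\Phi$ attains its maximum at an interior point $x_0\in\Omega$.

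At $x_0$, choose a local orthonormal frame $\{e_1,\dots,e_n\}$ on $M_u$ diagonalizing $h_{ij}(x_0)=\kappa_i\delta_{ij}$, set $F:=(\sigma_k/\sigma_l)^{\frac{1}{k-l}}$ with $F^{ij}:=\partial F/\partial \eta_{ij}$, and introduce the effective linearized coefficient on $h$,
\[
G^{ij}:=\delta_{ij}\sum_p F^{pp}-F^{ij},
\]
which at $x_0$ is diagonal with $G^{ii}=\sum_{p\ne i}F^{pp}\ge c_1(n,k,l)>0$ uniformly by properties (4) and (5) of the $\widetilde{\Gamma}_k$-cone. From the Gauss--Weingarten--Codazzi formulas I obtain
\[
\nabla_i u=\langle e_i,e_{n+1}\rangle,\qquad \nabla_i\nabla_j u=h_{ij}/\omega,\qquad \nabla_i\omega=\omega^2 h_{ij}\nabla_j u,
\]
\[
\nabla_i\nabla_j\omega=\omega(h^2)_{ij}+\omega^2(\nabla_k h_{ij})\nabla_k u+\tfrac{2}{\omega}\nabla_i\omega\,\nabla_j\omega.
\]
Together with $G^{ij}\nabla_k h_{ij}=\nabla_k\psi^{\frac{1}{k-l}}$ (from differentiating the equation) and $G^{ij}h_{ij}=\psi^{\frac{1}{k-l}}$ (by Euler's identity applied to the $1$-homogeneous $F$), this yields
\[
G^{ij}\nabla_i\nabla_j(\log\omega)=G^{ij}(h^2)_{ij}+\omega\,\nabla_k u\,\nabla_k\psi^{\frac{1}{k-l}}+\frac{G^{ij}\nabla_i\omega\,\nabla_j\omega}{\omega^2}.
\]

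The critical condition $\nabla_i(\log\omega)+A\nabla_i u=0$ at $x_0$ gives the eigenvalue-type identity
\[
h_{ij}\nabla_j u=-\tfrac{A}{\omega}\nabla_i u,
\]
and $G^{ij}\nabla_i\omega\,\nabla_j\omega/\omega^2=A^2 G^{ij}\nabla_i u\,\nabla_j u$. The crucial step is to control $\omega\,\nabla_k u\,\nabla_k\psi^{\frac{1}{k-l}}$: via the Weingarten decomposition $\nabla_k\psi^{\frac{1}{k-l}}=(D_X\psi^{\frac{1}{k-l}})\cdot e_k-(D_\nu\psi^{\frac{1}{k-l}})_j h_{kj}$, the $\nu$-contribution (a priori linear in the unbounded tensor $|h|$) rewrites, via the eigenvalue identity above, as
\[
-\omega(D_\nu\psi^{\frac{1}{k-l}})_j h_{kj}\nabla_k u=A(D_\nu\psi^{\frac{1}{k-l}})_j\nabla_j u,
\]
which is bounded by $CA$ with no $|h|$ dependence; the $D_X$-contribution is bounded directly by $C|Du|$. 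Substituting into the second-order inequality $G^{ij}\nabla_i\nabla_j\Phi(x_0)\le 0$ gives
\[
G^{ij}(h^2)_{ij}+A^2 G^{ij}\nabla_i u\,\nabla_j u+\frac{A\psi^{\frac{1}{k-l}}}{\omega}\le C(A+|Du|),
\]
with $C=C(n,k,l,\|\psi^{\frac{1}{k-l}}\|_{C^1})$. Using $G^{ij}\nabla_i u\,\nabla_j u\ge c_1|\nabla u|^2=c_1(1-1/\omega^2)\ge c_1/2$ for $\omega\ge\sqrt 2$ and choosing $A$ suitably large produces the desired control on $\omega(x_0)$ in terms of $\sup_{\partial\Omega}\omega$ and the given data.

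The main obstacle is the $\nu$-dependence of $\psi$: the intrinsic gradient $\nabla\psi^{\frac{1}{k-l}}$ on $M_u$ inherits, via the Weingarten map, a contribution linear in the second fundamental form $h$, whose size is a priori uncontrolled at the $C^1$ stage, and naive absorption via Young's inequality would introduce an unmanageable $|Du|^2$ term. The resolution exploits the specific eigenvalue identity $h\,\nabla u=-(A/\omega)\nabla u$ furnished by the critical condition itself, which replaces the problematic factor $h_{kj}\nabla_k u$ by a controlled expression in $A$ and $\nabla u$ and thereby fully eliminates the $|h|$-dependence of the bad term.
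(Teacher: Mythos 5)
Your overall strategy is the same as the paper's: apply the interior maximum principle to $\Phi=\log\omega+Au$, pass to the normalized operator, and use the critical condition $h_{ij}\nabla_ju=-\frac{A}{\omega}\nabla_iu$ to convert the $D_\nu\psi$-term (linear in $h$) into a controlled expression. That mechanism for handling the normal-derivative dependence is correct and is exactly what the paper does. Your use of the $1/(k-l)$-power normalization $F=(\sigma_k/\sigma_l)^{1/(k-l)}$ to obtain uniform coefficient bounds $G^{ii}\ge c_1>0$ is equivalent to the paper's $F^{11}\ge C_0\psi^{1-1/(k-l)}$ combined with $|D\psi|\le C\psi^{1-1/(k-l)}$, just with the $\psi$-weights absorbed.

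However, there is a genuine gap in the estimate of the $D_X$-contribution to $\omega\nabla_ku\,\nabla_k\psi^{\frac{1}{k-l}}$. You bound this term by $C|Du|$, which comes from the crude Cauchy--Schwarz
$\omega\,|\nabla u|\cdot|D_X\psi^{1/(k-l)}|\le C\,\omega|\nabla u|=C|Du|$.
But this bound is too weak for the argument to close. After discarding the nonnegative terms, your final inequality at the interior critical point reads $\tfrac{1}{2}A^2c_1\le C(A+|Du|)$. For any fixed $A$, this gives only a \emph{lower} bound on $|Du|(x_0)$; it is satisfied for $|Du|$ large, so there is no contradiction and no control on $\omega(x_0)$. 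Choosing $A$ large does not rescue it.

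The fix, which is implicit in the paper's frame rotation, is to exploit the specific geometry of $\nabla u$ as an $\mathbb{R}^{n+1}$-vector rather than just its length. Writing $\nabla u=\sum_k(\nabla_ku)e_k$, one finds using orthonormality of $\{e_1,\dots,e_n,\nu\}$ that
$\sum_k\nabla_ku\,\langle e_k,\epsilon_i\rangle=\frac{u_i}{\omega^2}$ for $i\le n$ and $\sum_k\nabla_ku\,\langle e_k,\epsilon_{n+1}\rangle=|\nabla u|^2$.
Hence the horizontal ($D_x$) part contributes $\omega\cdot\frac{(D_x\psi^{1/(k-l)})\cdot Du}{\omega^2}$, which is bounded by $\frac{C|Du|}{\omega}\le C$ — the extra factor $1/\omega$ is essential and is what you have lost. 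The vertical ($D_z$) part contributes $\omega\,\psi^{1/(k-l)}_z|\nabla u|^2$, which is $\ge 0$ by the hypothesis $\psi_z\ge 0$ (a hypothesis your proposal never invokes) and can be dropped. After both observations, the right-hand side of your inequality becomes $C(1+A)$ rather than $C(A+|Du|)$, whence $\tfrac{1}{2}A^2c_1\le C(1+A)$ forces $A\le C'$, a contradiction for $A$ large; the argument then concludes as intended. In the paper's proof this is visible in the coefficient $\nabla_1x_1=\gamma^{11}=1/\omega$ multiplying $\psi_{x_1}$ and in the retained nonnegative term $\omega\psi_z(\nabla_1u)^2$.
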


\begin{proof}
Consider the following test function
$$P=\log\omega+B\langle X, \epsilon_{n+1}\rangle=\log \omega+Bu,$$
where $\omega=\frac{1}{\langle\nu, \epsilon_{n+1}\rangle}=\sqrt{1+|Du|^2}$ and $B$ is a positive constant to be determined later. Suppose $P$ attains its maximum at an interior point $x_0\in\Omega$. We rotate the standard basis $\epsilon_1,\cdots, \epsilon_{n}$ to satisfy $u_1=|Du|, u_j=0$ for $j\geq 2$ at $x_0$. Define $e_i=\gamma^{is}\widetilde{\partial}_s, i=1,\cdots,n$, where $\widetilde{\partial}_s:=\epsilon_s+u_s\epsilon_{n+1}$ for $ s= 1,\cdots,n$. Then it is clear that $\{e_1,e_2,,\cdots,e_n\}$ is an orthonormal frame on $M_u$ near $X_0=(x_0,u(x_0))$ satisfying
$$|\nabla u|=\nabla_1u, \quad \nabla_i u=0, \quad \mbox{for}~i\geq 2$$
at $X_0$. Therefore by Weingarten formula,
$$\nabla_i\omega=\frac{h_{im}\nabla_mu}{\langle\nu, \epsilon_{n+1}\rangle^2}=\omega^2h_{im}\nabla_mu.$$
Thus at $X_0$,
$$\nabla_iP=\omega h_{i1}\nabla_1u+B\nabla_iu=0.$$
Taking $i=1$ and $i\geq 2$ respectively, then at $X_0$,
$$\omega h_{11}=-B, \quad h_{i1}=0.$$
We rotate $\{e_2,\cdots,e_n\}$ such that matrix $\{h_{ij}\}$ is diagonal at $X_0$. Thus at $X_0$,
\begin{equation}\label{E1}
\nabla_{ii}P=\omega^2(h_{im}\nabla_mu)^2+\omega\nabla_ih_{i1}\nabla_1u+\omega h_{im}\nabla_{mi}u+B\nabla_{ii}u\leq 0.
\end{equation}
Let $F^{ij}=\frac{\partial\left(\frac{\sigma_k}{\sigma_l}(\lambda(\eta))\right)}{\partial h_{ij}}$ and $\{y_k\}^n_{k=1}$ denote the standard local coordinate system on $\mathbb{S}^n$ near $(0,\cdots,0,1)$. Then differentiating equation \eqref{Eq1} and using Weingarten formula and Codazzi equation, we have at $X_0$,
\begin{eqnarray}\label{E2}
% \nonumber to remove numbering (before each equation)
  \nonumber F^{ii}\nabla_ih_{i1} &=&F^{ii}\nabla_1h_{ii}=e_1(\psi) \\
  \nonumber &=&\psi_{x_j}\nabla_1x_j+\psi_z\nabla_1u+\partial_{y_k}\psi\nabla_1\nu_k\\
   &=&\frac{\psi_{x_1}}{\omega}+\psi_z\nabla_1u+\frac{B\partial_{y_1}\psi}{\omega^2}.
\end{eqnarray}
By \eqref{ile}, we know that for $0\leq l<k<n$,
$$F^{11}\geq C_0\psi^{1-\frac{1}{k-l}},$$
where $C_0$ is a constant depending only on $n, k$ and $l$.

According to \eqref{E1}, \eqref{E2} and $\psi_z\geq 0, ~\psi^{\frac{1}{k-l}}\in C^1(\overline{\Omega}\times\mathbb{R}\times\mathbb{S}^n)$, we obtain at $X_0$,
\begin{eqnarray*}
% \nonumber to remove numbering (before each equation)
  0 &\geq& \omega^2F^{11}h_{11}^2(\nabla_1u)^2+\omega\nabla_1u(e_1\psi)+\omega^2F^{ii}h_{ii}^2+B(k-l)\psi\omega\\
  &\geq& B^2F^{11}(\nabla_1u)^2+\psi_{x_1}\nabla_1u+\frac{B\partial_{y_1}\psi\nabla_1u}{\omega}\\
  &\geq&(B^2C_0|\nabla u|^2-C|\nabla u|(1+\frac{B}{\omega}))\psi^{1-\frac{1}{k-l}},
\end{eqnarray*}
which implies
$$0\geq B^2C_0|\nabla u|-C(1+\frac{B}{\omega}).$$
%Since we can assume $\omega$ is sufficiently large, using $\frac{1}{\omega^2}+|\nabla u|^2=1$, we can assume $|\nabla u|^2\geq\frac{1}{2}$.
By choosing $B$ sufficiently large, the proof of Theorem \ref{c1} is completed.
\end{proof}

\section{Global second order derivative estimates}
\begin{theorem}\label{a}
Suppose $\psi^{\frac{1}{k-l}}\in C^{1,1}(\overline{\Omega}\times \mathbb{R}\times\mathbb{S}^n)$. Let $k\geq 2, 0\leq l<k<n$, $u\in C^4(\Omega)\cap C^2(\overline{\Omega})$ be an admissible solution of Dirichlet problem \eqref{ali}. Then there exists a positive constant $C$ depending on $n,k,l,\|u\|_{C^1},\|\psi^{\frac{1}{k-l}}\|_{C^{1,1}}$ such that
$$\sup_{\overline{\Omega}}|D^2u|\leq C(1+\sup_{\partial\Omega}|D^2u|).$$
\end{theorem}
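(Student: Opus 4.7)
The plan is to reduce the estimate to bounding the largest principal curvature $\kappa_{\max}$ of $M_u$ via a maximum principle applied to an auxiliary function; once $\kappa_{\max}$ is controlled, admissibility in $\widetilde{\Gamma}_k$ (which forces $H=\sum_i\kappa_i>0$ and hence $|\kappa_i|\leq (n-1)\kappa_{\max}$) together with the $C^1$ bound from Theorem \ref{c1} controls $|D^2u|$. I would work with the test function
$$W(X)=\log\kappa_{\max}(X)+\phi(|\nabla u|^2)+A\langle X,\epsilon_{n+1}\rangle,$$
where $\phi$ is a suitable convex increasing function of $|\nabla u|^2$ and $A>0$ is a large constant to be chosen at the end. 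If $W$ attains its maximum on $\partial M_u$, the global bound reduces to the boundary one; otherwise, let $X_0$ be an interior maximum, fix a local orthonormal frame diagonalizing $\{h_{ij}\}$ at $X_0$ with $h_{11}=\kappa_{\max}$, and use the standard perturbation of the largest eigenvalue to handle any multiplicity.

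Let $F^{ij}=\partial[\sigma_k/\sigma_l(\lambda(\eta))]/\partial h_{ij}$ and $\mathcal{L}=F^{ij}\nabla_{ij}$. The critical-point conditions $\nabla_iW(X_0)=0$ combined with the Weingarten formula determine $\nabla_ih_{11}$ at $X_0$ in terms of $h_{11}$, $h_{ii}$, $\nabla_iu$, and the parameters $A,\phi'$. Differentiating the normalized equation $G:=[\sigma_k/\sigma_l(\lambda(\eta))]^{1/(k-l)}=b$ with $b:=\psi^{1/(k-l)}\in C^{1,1}$ twice along $e_1$ and commuting covariant derivatives via the Ricci identity \eqref{Ric} produces an expression for $F^{ii}\nabla_{ii}h_{11}$ involving $\nabla_{11}\psi$, the cubic curvature terms $\kappa_1^2\sum F^{ii}h_{ii}-\kappa_1\sum F^{ii}h_{ii}^2$, and the second-derivative term of $G$, the last of which is nonpositive thanks to the concavity inequality \eqref{pro3}. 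Substituting into $\mathcal{L}(\log h_{11})=\mathcal{L}(h_{11})/h_{11}-F^{ii}(\nabla_ih_{11})^2/h_{11}^2$ and using $\mathcal{L}W(X_0)\leq 0$ yields the working inequality.

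The main obstacle is the contribution of $\nabla_{11}\psi$, where the degeneracy intrudes. Since $\psi=b^{k-l}$, one has $\nabla_{11}\psi=(k-l)(k-l-1)b^{k-l-2}(\nabla_1b)^2+(k-l)b^{k-l-1}\nabla_{11}b$, and the factor $b^{k-l-2}$ is singular as $b\to 0$. The key to handling it is the bound $\sum_iF^{ii}\geq c(n,k,l)\,b^{k-l-1}$ from \eqref{ile}, which supplies a matching factor and leaves only a controllable $b^{-1}(\nabla_1b)^2$ contribution; a Cauchy--Schwarz step then absorbs the cross terms between $\nabla_1b$ and $\nabla_1H$ (or $\nabla_1h_{11}$) into the good negative term $-F^{ii}(\nabla_ih_{11})^2/h_{11}^2$ coming from $\mathcal{L}(\log h_{11})$. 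Finally, choosing $A$ large enough so that $\mathcal{L}(A\langle X,\epsilon_{n+1}\rangle)=-A\langle\nu,\epsilon_{n+1}\rangle F^{ii}h_{ii}$ together with the $\phi''$ contribution dominates the remaining $X$- and $\nu$-dependent error terms forces $\kappa_1(X_0)\leq C$, completing the proof.
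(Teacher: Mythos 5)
Your overall strategy — bound $\kappa_{\max}$ by a maximum-principle argument on $\log\kappa_{\max}$ plus a correction, differentiate the normalized equation $G=\psi^{1/(k-l)}$, use the concavity inequality \eqref{pro3}, and exploit $\sum_iF^{ii}\gtrsim\psi^{1-1/(k-l)}$ from \eqref{ile} to match the singular factor in $\nabla_{11}\psi$ — is the right general framework and mirrors the paper. But the specific auxiliary function you choose creates a gap that the paper's choice avoids, and some of your sign/role assignments are off.

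The paper takes $Q=\log\kappa_{\max}-\log(v-a)$ with $v=\langle\nu,\epsilon_{n+1}\rangle$. The point of $-\log(v-a)$ is a cancellation: at the interior max of $\widetilde Q$ the first-order condition \eqref{Qi} gives $\nabla_ih_{11}/h_{11}=\nabla_iv/(v-a)$, so the negative term $-F^{ii}(\nabla_ih_{11})^2/h_{11}^2$ produced by $\mathcal{L}(\log h_{11})$ is \emph{exactly} cancelled by the positive term $+F^{ii}(\nabla_iv)^2/(v-a)^2$ coming from $-\log(v-a)$. After this cancellation, the coercive term $\frac{a}{v-a}F^{ii}h_{ii}^2\geq c\,\psi^{1-1/(k-l)}h_{11}^2$ survives and beats $-Ch_{11}\psi^{1-1/(k-l)}$ from $\nabla_{11}\psi$. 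Your function $\log\kappa_{\max}+\phi(|\nabla u|^2)+A\langle X,\epsilon_{n+1}\rangle$ has no analogous cancellation. You refer to $-F^{ii}(\nabla_ih_{11})^2/h_{11}^2$ as "the good negative term" into which cross terms can be absorbed, but in the inequality $0\geq\mathcal{L}W$ it sits on the wrong side: it lowers $\mathcal{L}W$, so it is a \emph{bad} term that must itself be controlled. Substituting the critical-point condition turns it into $-F^{ii}(\nabla_iu)^2(2\phi'vh_{ii}-A)^2$, and absorbing the $\phi'^2$ piece by the $\phi''F^{ii}(\nabla_i|\nabla u|^2)^2$ term (needing $\phi''\geq2\phi'^2$) still leaves the coefficient of $F^{ii}h_{ii}^2$ in the balance as $2\phi'v^2-1$ modulo terms carrying weights $(\nabla_iu)^2$; closing this requires exactly the kind of case analysis on the distribution of $(\nabla_iu)^2$ among the eigendirections that the non-degenerate treatments (Chu-Jiao \cite{CJ21}) go through, and in the degenerate setting one cannot afford to lose powers of $\psi$ along the way. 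The paper sidesteps all of this with the $-\log(v-a)$ cancellation.

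There is also a sign/order error in the $A\langle X,\epsilon_{n+1}\rangle$ term: since $\nabla_{ij}X=h_{ij}\nu$ and $\nu$ is the \emph{upward} unit normal, $\mathcal{L}(Au)=A\,v\,F^{ii}h_{ii}=A(k-l)\psi\,v>0$, not $-A\langle\nu,\epsilon_{n+1}\rangle F^{ii}h_{ii}$, and it is a bounded quantity (proportional to $\psi$), not a coercive $F^{ii}h_{ii}^2$ term. It therefore cannot "dominate the remaining error terms" as you suggest; the coercive term has to come from elsewhere. In short: the concavity/\eqref{pro3} step, the estimate on $\nabla_{11}\psi$ using $\psi=b^{k-l}$ with $b\in C^{1,1}$, and the bound on $\nabla_ph_{11}/h_{11}$ via the critical-point condition are all sound and match the paper; but your auxiliary function does not produce the cancellation that makes the paper's inequality close, and without further work (or a switch to $-\log(v-a)$) the argument has a genuine gap.
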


\begin{proof}
Let $v:=\frac{1}{\omega}=\langle\nu, \epsilon_{n+1}\rangle$. There exists a positive constant $a$ depending only on $\|Du\|_{C^0}$ such that $v\geq 2a$. Assume that $\kappa_{max}$ is the largest principle curvature, then we consider the auxiliary function
$$Q=\log \kappa_{max}-\log(v-a).$$
Suppose that $Q$ attains its maximum at a point $X_0=(x_0,u(x_0))\in M_u, x_0\in \Omega$, then we can choose a local orthonormal frame $\{e_1,e_2,\cdots,e_n\}$ near $X_0$ such that
$$h_{ij}=h_{ii}\delta_{ij},\quad h_{11}\geq h_{22}\geq\cdots \geq h_{nn}$$
at $X_0$. Then $\eta_{ij}=Hg_{ij}-h_{ij}$ is also diagonal at $X_0$ and
$$\eta_{11}\leq\eta_{22}\leq\cdots\leq\eta_{nn}.$$
We define a new function $\widetilde{Q}$ by
$$\widetilde{Q}=\log h_{11}-\log(v-a).$$
It obvious that $\widetilde{Q}$ also attains its maximum value at $X_0$. Denote
$$F=\frac{\sigma_k(\lambda(\eta))}{\sigma_l(\lambda(\eta))},\quad F^{ij}=\frac{\partial F}{\partial h_{ij}}, \quad F^{ij,rs}=\frac{\partial^2 F}{\partial h_{ij}\partial h_{rs}}.$$
From now on, all calculations are performed at $X_0$. Then
\begin{equation}\label{Qi}
  0=\nabla_i\widetilde{Q}=\frac{\nabla_ih_{11}}{h_{11}}-\frac{\nabla_iv}{v-a},
\end{equation}
\begin{equation}\label{Qii}
  0\geq F^{ii}\nabla_{ii}\widetilde{Q}=\frac{F^{ii}\nabla_{ii}h_{11}}{h_{11}}-\frac{F^{ii}(\nabla_ih_{11})^2}{h_{11}^2}+\frac{F^{ii}(\nabla_i v)^2}{(v-a)^2}-\frac{F^{ii}\nabla_{ii}v}{v-a}.
\end{equation}
By Weingarten formula,
\begin{equation}\label{vi}
  \nabla_i v=-h_{im}\langle e_m, \epsilon_{n+1}\rangle=-h_{im}\nabla_mu, \quad F^{ii}(\nabla_i v)^2\leq CF^{ii}h_{ii}^2.
\end{equation}
And by Gauss formula, Codazzi equation and $\psi^{\frac{1}{k-l}}\in C^1(\overline{\Omega}\times \mathbb{R}\times\mathbb{S}^n)$,
\begin{eqnarray}\label{nu}
% \nonumber to remove numbering (before each equation)
 \nonumber F^{ii}\nabla_{ii}v &=& -F^{ii}\nabla_mh_{ii}\nabla_mu-F^{ii}h_{ii}^2v \\
  \nonumber &=&-\langle\nabla \psi,\nabla u\rangle-vF^{ii}h_{ii}^2\\
  &\leq&-vF^{ii}h_{ii}^2+C\psi^{1-\frac{1}{k-l}},
\end{eqnarray}
where $C$ is a constant depending on $\|\psi^{\frac{1}{k-l}}\|_{C^1}$ and $\|u\|_{C^1}$.

Differentiating equation \eqref{Eq1} twice and using \eqref{Ric},
\begin{eqnarray}
% \nonumber to remove numbering (before each equation)
  \nonumber F^{ii}\nabla_{ii}h_{11} &=& F^{ii}\nabla_{11}h_{ii}+F^{ii}(h_{i1}^2-h_{ii}h_{11})h_{ii}+F^{ii}(h_{ii}h_{11}-h_{i1}^2)h_{11}\\
 \nonumber &=& F^{ii}\nabla_{11}h_{ii}-F^{ii}h_{ii}^2h_{11}+F^{ii}h_{ii}h_{11}^2\\
  &=&\nabla_{11}\psi-F^{ij,rs}\nabla_1h_{ij}\nabla_1h_{rs}-F^{ii}h_{ii}^2h_{11}+(k-l)\psi h_{11}^2.
\end{eqnarray}
Let $\widetilde{\psi}=\psi^{\alpha}, \alpha=\frac{1}{k-l}$. Assume that $h_{11}\geq 1$. Then by direct calculations,
\begin{equation}\label{e1}
  \nabla_{11}\psi\geq-C\psi^{1-\alpha}h_{11}^2-\frac{1}{\alpha}\sum_p\nabla_ph_{11}(d_{\nu}\widetilde{\psi})(e_p)\psi^{1-\alpha}+(1-\alpha)
\frac{(\nabla_1\psi)^2}{\psi}.
\end{equation}
Combining with \eqref{Qi} and \eqref{vi},
\begin{equation}\label{e2}
  \frac{\nabla_ph_{11}}{h_{11}}(d_{\nu}\widetilde{\psi})(e_p)\psi^{1-\alpha}\leq Ch_{11}\psi^{1-\alpha}.
\end{equation}
By \eqref{pro3},
\begin{equation}\label{e3}
  F^{ij,rs}\nabla_1h_{ij}\nabla_1h_{rs}\leq(1-\frac{1}{k-l})\frac{(\nabla_1\psi)^2}{\psi}.
\end{equation}
According to \eqref{e1}-\eqref{e3},
\begin{equation}\label{e4}
  \frac{F^{ii}\nabla_{ii}h_{11}}{h_{11}}\geq-Ch_{11}\psi^{1-\alpha}-F^{ii}h_{ii}^2+(k-l)\psi h_{11}.
\end{equation}
Combining \eqref{ile}, \eqref{Qi}, \eqref{Qii}, \eqref{nu} and \eqref{e4},
\begin{eqnarray*}
% \nonumber to remove numbering (before each equation)
  0 &\geq& \frac{a}{v-a}F^{ii}h_{ii}^2-\frac{C}{v-a}\psi^{1-\frac{1}{k-l}}-Ch_{11}\psi^{1-\frac{1}{k-l}}+(k-l)\psi h_{11} \\
  &\geq& \frac{a}{v-a}F^{11}h_{11}^2-Ch_{11}\psi^{1-\frac{1}{k-l}}-C\psi^{1-\frac{1}{k-l}}\\
  &\geq&\left(\frac{a}{v-a}c(n,k,l)h_{11}^2-Ch_{11}-C\right)\psi^{1-\frac{1}{k-l}},
\end{eqnarray*}
which implies $h_{11}\leq C$. Then the proof is completed.
\end{proof}

\section{Boundary second order derivative estimates}
Since equation \eqref{Eq1} can be written as
\begin{equation}\label{geq}
  G(D^2u, Du)=\frac{\sigma_k}{\sigma_l}(\lambda\{b_{ij}\})=f(\lambda(A))=\psi(x,u,Du),
\end{equation}
where $G=G(r,p)$ is viewed as a function of $(r,p)$ for $r\in\mathbb{S}^{n\times n}, p\in \mathbb{R}^n$, $\{b_{ij}\}=T(A)=(trace A)I-A$ with $A=\{a_{ij}\}$. Define
$$G^{ij}=\frac{\partial G}{\partial r_{ij}}(D^2u, Du), \quad G^i=\frac{\partial G}{\partial p_i}(D^2u, Du), \quad \psi_{u_i}=\frac{\partial \psi}{\partial u_i}(x,u,Du),$$
and
$$L=G^{ij}\partial_{ij}-\psi_{u_i}\partial_i.$$
We need the following lemma in \cite{GS04}.
\begin{lemma}\label{lem}
We have
$$G^s=-\frac{u_s}{\omega^2}\sum_if_i\kappa_i-\frac{2}{\omega(\omega+1)}\sum_{t,j}F^{ij}a_{it}(\omega u_t\gamma^{sj}+u_j\gamma^{ts}),$$
where $a_{ij}=\frac{1}{\omega}\gamma^{ik}u_{kl}\gamma^{lj}$, $\kappa=\lambda(\{a_{ij}\})$, $f_i=\frac{\partial f}{\partial \kappa_i}$ and $F^{ij}=\frac{\partial f(\lambda(A))}{\partial a_{ij}}$.
\end{lemma}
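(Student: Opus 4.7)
The target formula is essentially a chain-rule computation: starting from $G(r,p)=f(\lambda(A))$ with $a_{ij}(r,p)=\tfrac{1}{\omega}\gamma^{ik}(p)\,r_{kl}\,\gamma^{lj}(p)$, one has
\[
G^s=\frac{\partial G}{\partial p_s}=\sum_{i,j}F^{ij}\,\frac{\partial a_{ij}}{\partial p_s},
\]
since at fixed $r$ the only dependence on $p$ sits inside $A$. The plan is therefore to differentiate $a_{ij}$ explicitly and identify each resulting block with one of the two summands in the claimed formula.

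The derivative of $a_{ij}$ splits naturally into three pieces, corresponding to the $1/\omega$ factor and to the two $\gamma$ factors. For the first piece, $\partial(1/\omega)/\partial p_s=-p_s/\omega^{3}=-u_s/\omega^{3}$ (evaluating along $p=Du$), which yields the contribution $-\tfrac{u_s}{\omega^{2}}F^{ij}a_{ij}$. The identity $\sum_{i,j}F^{ij}a_{ij}=\sum_i f_i\kappa_i$ (a consequence of the eigenvalue/matrix form of a symmetric curvature function applied to $A$) rewrites this as the first term $-\tfrac{u_s}{\omega^{2}}\sum_i f_i\kappa_i$ of the lemma. For the other two pieces, I would differentiate $\gamma^{ik}=\delta_{ik}-\tfrac{u_iu_k}{\omega(1+\omega)}$ in $p_s$; a direct calculation produces terms of the form $-\tfrac{\delta_{is}u_k+\delta_{ks}u_i}{\omega(1+\omega)}$ together with a cubic remainder in $u$ (from differentiating $1/(\omega(1+\omega))$). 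The key simplification is the identity $\gamma^{ik}u_k=u_i/\omega$ (equivalently $\gamma_{ik}u_k=u_i$), which collapses the cubic remainder and allows the two $\gamma$-derivative contributions to be packaged exactly as $-\tfrac{2}{\omega(\omega+1)}F^{ij}a_{it}(\omega u_t\gamma^{sj}+u_j\gamma^{ts})$ after symmetrizing in $i,j$ using the symmetry $F^{ij}=F^{ji}$.

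Step-by-step, I would: (i) write $G^s=F^{ij}\partial_s a_{ij}$; (ii) expand $\partial_s a_{ij}$ into the three pieces described above, being careful to record $p=Du$ so that $\partial_s\omega=u_s/\omega$; (iii) dispose of the $1/\omega$-piece via the trace identity $F^{ij}a_{ij}=\sum f_i\kappa_i$; (iv) use $\gamma^{ik}u_k=u_i/\omega$ repeatedly to fold the cubic $u$-terms back into lower-order $\gamma^{ab}u_c$ expressions; (v) collect everything, using $F^{ij}=F^{ji}$ to combine the two $\gamma$-derivative contributions into the single symmetric expression involving $\omega u_t\gamma^{sj}+u_j\gamma^{ts}$.

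The only real obstacle is bookkeeping: the derivative of $\gamma^{ik}(p)$ produces several terms, and showing that the cubic-in-$u$ parts cancel or assemble into the $\gamma^{sj}$/$\gamma^{ts}$ form requires repeated use of $\omega^{2}-1=|Du|^{2}$ together with $\gamma^{ik}u_k=u_i/\omega$. Once these two identities are applied systematically, the formula claimed in Lemma \ref{lem} reads off, with no further input from the curvature function $f$ beyond the abstract chain rule and the trace identity.
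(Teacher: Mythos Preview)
The paper does not supply its own proof of this lemma; it simply quotes the formula from Guan--Spruck \cite{GS04}. Your chain-rule outline is exactly the standard derivation and is correct: the $\omega^{-1}$ factor contributes $-u_s\omega^{-2}\,F^{ij}a_{ij}=-u_s\omega^{-2}\sum_i f_i\kappa_i$, and the two $\gamma$-derivative pieces, after simplification with $\gamma^{ik}u_k=u_i/\omega$ (equivalently $\gamma_{ik}u_k=\omega u_i$) and combination via $F^{ij}=F^{ji}$, assemble into the second summand $-\tfrac{2}{\omega(\omega+1)}F^{ij}a_{it}(\omega u_t\gamma^{sj}+u_j\gamma^{ts})$.
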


Define
\begin{equation}\label{w}
  W:=\nabla'_{\alpha}u-\frac{1}{2}\sum_{\beta\leq n-1}u_{\beta}^2,~~\mbox{on}~~\overline{\omega}_{\delta}
\end{equation}
for some small $\delta$, where
$$\nabla'_{\alpha}u:=u_{\alpha}+\rho_{\alpha}u_n,~~\mbox{for}~~1\leq\alpha\leq n-1.$$
%As \cite{IV1990} (can also see \cite{JW22, JS22}),
Then we prove an important lemma, which will be used to derive tangential-normal estimates.
\begin{lemma}\label{im}
If $\delta$ is sufficiently small and $\psi^{\frac{1}{k-l}}\in C^1(\overline{\Omega}\times\mathbb{R}\times\mathbb{S}^n)$, then
\begin{equation}\label{key}
  LW\leq C\left(\psi^{1-\frac{1}{k-l}}+\psi|DW|+\sum_iG^{ii}+G^{ij}W_iW_j\right),
\end{equation}
where $C$ is a positive constant depending on $n,k,l,\|u\|_{C^1},\|\psi^{\frac{1}{k-l}}\|_{C^1}$ and $\partial\Omega$.
\end{lemma}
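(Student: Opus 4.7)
The plan is a direct computation of $LW$ followed by a term-by-term analysis. Since $W = \nabla'_\alpha u - \frac{1}{2}\sum_{\beta \leq n-1} u_\beta^2$ and the local defining function $\rho$ depends only on the tangential coordinates (so $(\rho_\alpha)_n = 0$), I would decompose
$LW = L(u_\alpha) + L(\rho_\alpha u_n) - \frac{1}{2} L\bigl(\sum_\beta u_\beta^2\bigr)$.
Differentiating the equation $G(D^2u, Du) = \psi(x,u,Du)$ in the direction $x_\gamma$ gives
$G^{ij} u_{ij\gamma} + G^i u_{i\gamma} = \psi_{x_\gamma} + \psi_u u_\gamma + \psi_{u_i} u_{i\gamma}$,
and the drift $-\psi_{u_i}\partial_i$ built into $L$ is precisely what cancels the $\psi_{u_i}u_{i\gamma}$ term, yielding the clean identity $L(u_\gamma) = \psi_{x_\gamma} + \psi_u u_\gamma - G^i u_{i\gamma}$.

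Using this identity together with the product rule for $L(\rho_\alpha u_n)$ and $L(\frac{1}{2}\sum u_\beta^2)$, and the defining relation $W_i = u_{\alpha i} + (\rho_\alpha)_i u_n + \rho_\alpha u_{ni} - \sum u_\beta u_{\beta i}$ to consolidate the $G^i$ terms, one arrives at
\[
LW = E - G^i W_i + 2 G^{ij}(\rho_\alpha)_i u_{nj} - \sum_{\beta\leq n-1} G^{ij} u_{\beta i} u_{\beta j},
\]
where $E$ collects harmless terms $\psi_{x_\gamma}, \psi_u(\cdots), u_n G^{ij}(\rho_\alpha)_{ij}, -u_n\psi_{u_i}(\rho_\alpha)_i$. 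From $\psi^{1/(k-l)} \in C^1$ one has $|D_x\psi| + |\psi_u| + |\psi_{u_i}| \leq C\psi^{1-1/(k-l)}$, and combined with $\partial\Omega \in C^{2,1}$ this controls $E$ by $C(\psi^{1-1/(k-l)} + \sum G^{ii})$. The drift $-G^iW_i$ is handled via Lemma \ref{lem}: the $(k-l)$-homogeneity of $f(\kappa) = F(H-\kappa_1,\dots,H-\kappa_n)$ gives the Euler relation $\sum f_i\kappa_i = (k-l)\psi$, and inserting this into the explicit formula for $G^s$ bounds the contribution by $C(\psi|DW| + \sum G^{ii})$.

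The core algebraic step is the quadratic block $2G^{ij}(\rho_\alpha)_i u_{nj} - \sum_{\beta\leq n-1} G^{ij} u_{\beta i} u_{\beta j}$. I would use the matrix identity $\sum_{\beta=1}^n u_{\beta i}u_{\beta j} = (D^2u\cdot D^2u)_{ij}$ to split off the normal contribution,
\[
-\sum_{\beta\leq n-1} G^{ij}u_{\beta i}u_{\beta j} = -G^{ij}(D^2u)^2_{ij} + G^{ij}u_{ni}u_{nj},
\]
and then complete the square in the normal component:
\[
G^{ij}u_{ni}u_{nj} + 2G^{ij}(\rho_\alpha)_i u_{nj} = G^{ij}(u_{ni}+(\rho_\alpha)_i)(u_{nj}+(\rho_\alpha)_j) - G^{ij}(\rho_\alpha)_i(\rho_\alpha)_j.
\]
The term $-G^{ij}(D^2u)^2_{ij}$ is non-positive and is discarded; $-G^{ij}(\rho_\alpha)_i(\rho_\alpha)_j$ contributes at most $C\sum G^{ii}$; and the completed square is rewritten via the relation $\rho_\alpha(u_{ni}+(\rho_\alpha)_i) = W_i - u_{\alpha i} + (\rho_\alpha)_i(\rho_\alpha - u_n) + \sum u_\beta u_{\beta i}$, using Cauchy-Schwarz together with a reserved portion of $G^{ij}(D^2u)^2_{ij}$ to absorb the stray tangential Hessian products, producing a contribution of order $CG^{ij}W_iW_j + C\sum G^{ii}$.

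The principal obstacle is this final rewriting of the completed square as $G^{ij}W_iW_j$: the factor $\rho_\alpha$ vanishes at the reference boundary point, so one cannot merely divide by $\rho_\alpha$. Instead, the smallness of $\delta$ must be exploited so that $|\rho_\alpha|$ and $|(\rho_\alpha)_i|$ are both small on $\overline{\omega}_\delta$, allowing the tangential Hessian pieces $u_{\alpha i}$ and $u_\beta u_{\beta i}$ to be swallowed by the good negative quadratic $-G^{ij}(D^2u)^2_{ij}$ reserved from the previous step. It is exactly here that the regularity $\psi^{1/(k-l)} \in C^1$ is fine-tuned: any worse degeneracy in $\psi$ would spoil the balance that produces precisely the exponent $\psi^{1-1/(k-l)}$ on the right-hand side.
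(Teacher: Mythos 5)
Your setup (differentiating the equation, isolating the quadratic block $2G^{ij}u_{ni}\rho_{\alpha j}-\sum_{\beta\leq n-1}G^{ij}u_{\beta i}u_{\beta j}$, handling the error term through $\psi^{1/(k-l)}\in C^1$) agrees with what the paper does, but the central step of your argument fails. The ``complete the square'' manipulation is algebraically a tautology: after discarding the good term $-G^{ij}(D^2u)^2_{ij}$, what remains is the positive quantity $G^{ij}P_iP_j$ with $P_i=u_{ni}+(\rho_\alpha)_i$, and the identity $\rho_\alpha P_i=W_i-u_{\alpha i}+(\rho_\alpha)_i(\rho_\alpha-u_n)+\sum_\beta u_\beta u_{\beta i}$ only yields
\[
\rho_\alpha^2\,G^{ij}P_iP_j\leq C\bigl(G^{ij}W_iW_j+\sum_iG^{ii}+G^{ij}(D^2u)^2_{ij}\bigr).
\]
To turn this into a bound on $G^{ij}P_iP_j$ you must divide by $\rho_\alpha^2$, which vanishes at the reference boundary point; and even where $\rho_\alpha$ is bounded away from zero the right side still carries the uncontrolled term $\rho_\alpha^{-2}G^{ij}(D^2u)^2_{ij}$, which is not dominated by the single copy of $-G^{ij}(D^2u)^2_{ij}$ you reserved. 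Shrinking $\delta$ does not repair this: the claim that $|(\rho_\alpha)_i|$ is small on $\omega_\delta$ is false --- $(\rho_\alpha)_\beta=\rho_{\alpha\beta}\approx\kappa_\alpha^b\delta_{\alpha\beta}$ is the boundary curvature and is $O(1)$, not $O(\delta)$ --- and more to the point, the dangerous quantity $G^{ij}u_{ni}u_{nj}$ sitting inside $G^{ij}P_iP_j$ is second-order information about the unknown solution, which $\delta$-smallness of the domain cannot suppress.

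Your estimate of the drift term $-G^iW_i$ is also incomplete. The Euler relation $\sum_if_i\kappa_i=(k-l)\psi$ controls only the first term of $G^s$ in Lemma~\ref{lem}; the second term is of size $\sum_if_i|\kappa_i|$, which is in general far larger than $\psi$ or $\sum_iG^{ii}$ and must be absorbed against the quadratic $\sum_if_i\kappa_i^2$ coming from the $\beta$-sum. This absorption is precisely where the paper's argument becomes delicate: after passing to a frame diagonalizing $\{a_{ij}\}$ it writes $\sum_{\beta\leq n-1}G^{ij}u_{\beta i}u_{\beta j}=\omega\sum_{\beta\leq n-1}\sum_i\eta_{\beta i}^2f_i\kappa_i^2$, and the coefficient $\sum_{\beta\leq n-1}\eta_{\beta i}^2$ can degenerate in one direction $r$. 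The paper handles this by a case analysis, and in the difficult subcase the entire estimate rests on the inequality $f_r\kappa_r\leq C\psi$, whose proof exploits the structural relation $\lambda_i=\sum_j\kappa_j-\kappa_i$ and the ordering of the $\lambda_i$ in the cone $\widetilde\Gamma_k$. Your proposal contains no substitute for this key inequality or for the case analysis, and the linear-algebra rearrangement you propose does not circumvent the need for them.
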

\begin{proof}
By \eqref{w} and differentiating equation \eqref{geq},
\begin{eqnarray}\label{wij}
% \nonumber to remove numbering (before each equation)
 \nonumber G^{ij}W_{ij}+G^sW_s &=& \nabla'_{\alpha}\psi-\sum_{\beta\leq n-1}u_{\beta}\psi_{\beta}-\sum_{\beta\leq n-1}G^{ij}u_{\beta i}u_{\beta j} \\
  &&+2G^{ij}u_{ni}\rho_{\alpha j}+u_nG^{ij}\rho_{\alpha ij}+u_nG^s\rho_{\alpha s}.
\end{eqnarray}
Combining \eqref{wij} and $\psi^{\frac{1}{k-l}}\in C^1(\overline{\Omega}\times\mathbb{R}\times\mathbb{S}^n)$, it is easy to derive
\begin{eqnarray}\label{lw}
% \nonumber to remove numbering (before each equation)
 \nonumber LW+G^sW_s&\leq&C\psi^{1-\frac{1}{k-l}}+2G^{ij}u_{ni}\rho_{\alpha j}\\
  &&-\sum_{\beta\leq n-1}G^{ij}u_{\beta i}u_{\beta j}+u_nG^{ij}\rho_{\alpha ij}+u_nG^s\rho_{\alpha s}.
\end{eqnarray}
Since
$$G^{ij}=\frac{1}{\omega}\sum_{s,t}\gamma^{is}F^{st}\gamma^{tj}~~\mbox{and}~~u_{ij}=\omega\sum_{s,t}\gamma_{is}a_{st}\gamma_{tj},$$
it follows that
$$\sum_{\beta\leq n-1}G^{ij}u_{\beta i}u_{\beta j}=\omega\sum_{\beta\leq n-1}\sum_{s,t}F^{ij}\gamma_{\beta s}\gamma_{\beta t}a_{si}a_{tj}.$$
By \cite{GS04}, we choose an orthogonal matrix $B=\{b_{ij}\}$ such that $\{a_{ij}\}$ and $\{F^{ij}\}$ are diagonalized at the same time
$$F^{ij}=\sum_sb_{is}f_sb_{js}~~\mbox{and}~~a_{ij}=\sum_sb_{is}\kappa_sb_{js}.$$
Then
\begin{equation*}
  \sum_{\beta\leq n-1}G^{ij}u_{\beta i}u_{\beta j}=\omega\sum_{\beta\leq n-1}\sum_i\left(\sum_s\gamma_{\beta s}b_{si}\right)^2f_i\kappa_i^2.
\end{equation*}
Let the matrix $\eta=\{\eta_{ij}\}=\{\sum_s\gamma_{is}b_{sj}\}$. It is clear that $\eta\cdot\eta^{T}=g$ and $|\det(\eta)|=\sqrt{1+|Du|^2}$. Therefore
\begin{equation}\label{gij}
\sum_{\beta\leq n-1}G^{ij}u_{\beta i}u_{\beta j}=\omega\sum_{\beta\leq n-1}\sum_i\eta_{\beta i}^2f_i\kappa_i^2.
\end{equation}
We also have
\begin{equation}\label{uni}
  G^{ij}u_{ni}\rho_{\alpha j}=\sum_{i,t}f_i\kappa_ib_{si}\gamma^{js}b_{ti}\gamma_{nt}\rho_{\alpha j}\leq C\sum_if_i|\kappa_i|.
\end{equation}
For any indices $j,t$,
$$F^{ij}a_{it}=\sum_ib_{ti}f_i\kappa_ib_{ij}\leq\sum_if_i|\kappa_i|.$$
From Lemma \ref{lem},
\begin{equation}\label{gs}
  |G^s\rho_{\alpha s}|\leq C\sum_if_i|\kappa_i|.
\end{equation}
By \eqref{lw}-\eqref{gs},
\begin{equation}\label{lwg}
  LW+G^sW_s\leq C(\psi^{1-\frac{1}{k-l}}+\sum_iG^{ii}+\sum_if_i|\kappa_i|)-\omega\sum_{\beta\leq n-1}\sum_i\eta_{\beta i}^2f_i\kappa_i^2.
\end{equation}
For the term $G^sW_s$, by Lemma \ref{lem} and the definition of the matrix $\{b_{ij}\}$,
\begin{eqnarray}\label{gsws}
 \nonumber -G^sW_s&=&\frac{1}{\omega}\sum_s\left(\frac{(k-l)\psi u_s}{\omega}+2\sum_{t,i}f_i\kappa_i(b_{ti}u_t)\gamma^{sj}b_{ji}\right)W_s\\
  &\leq&C\psi|DW|+\frac{2}{\omega}\sum_{t,i}f_i\kappa_i(b_{ti}u_t)\gamma^{sj}b_{ji}W_s.
\end{eqnarray}
Next we divide into two cases as follows:

$\mathbf{Case (a):}$~~$\sum_{\beta\leq n-1}\eta_{\beta i}^2\geq \epsilon$~~for all $i$.

By \eqref{gij},
\begin{equation}\label{bet}
  \sum_{\beta\leq n-1}G^{ij}u_{\beta i}u_{\beta j}\geq\epsilon\sum_if_i\kappa_i^2.
\end{equation}
Using Cauchy-Schwarz inequality, we get
\begin{equation}\label{cs}
  \frac{2}{\omega}\kappa_i(b_{ti}u_t)\gamma^{sj}b_{ji}W_s\leq\frac{\epsilon}{2}\kappa_i^2+\frac{C}{\epsilon}(\gamma^{sj}b_{ji}W_s)^2.
\end{equation}
Combining with \eqref{gsws} and \eqref{cs},
\begin{eqnarray}\label{ewq}
 \nonumber -G^sW_s&\leq& C\psi|DW|+\frac{\epsilon}{2}\sum_if_i\kappa_i^2+\frac{C}{\epsilon}f_i(\gamma^{sj}b_{ji}W_s)(\gamma^{tm}b_{mi}W_t)\\
  &\leq& C\psi|DW|+\frac{\epsilon}{2}\sum_if_i\kappa_i^2+\frac{C}{\epsilon}G^{ij}W_iW_j.
\end{eqnarray}
For any $\overline{\varepsilon}>0$,
\begin{equation}\label{sui}
  \sum_if_i|\kappa_i|\leq\overline{\varepsilon}\sum_if_i\kappa_i^2+\frac{C}{\overline{\varepsilon}}\sum_iG^{ii}.
\end{equation}
By \eqref{lwg}-\eqref{sui} and choosing $\overline{\varepsilon}\leq\frac{\epsilon}{2}$, it is easy to derive \eqref{key}.

$\mathbf{Case (b):}$~~$\sum_{\beta\leq n-1}\eta_{\beta r}^2< \epsilon$~~for some index $1\leq r\leq n$, where $\epsilon$ is some positive constant to be determined later.

As in \cite{IV1990},
$$1\leq\det(\eta)\leq\eta_{nr}\det(\eta')+C_1\epsilon\leq\sqrt{1+\mu_1^2}|\det(\eta')|+C_1\epsilon,$$
where $\eta':=\{\eta_{\alpha\beta}\}_{\alpha\neq n,\beta\neq r}, \mu_1:=\|Du\|_{C^0}$ and $C_1$ is a positive constant depending on $n$ and $\mu_1$. Set $\epsilon$ small enough such that $C_1\epsilon<\frac{1}{2}$, then
$$|\det(\eta')|\geq\frac{1}{2\sqrt{1+\mu_1^2}}.$$
On the other hand, for any fixed $\alpha\neq r$, we derive
$$|\det(\eta')|\leq C\sum_{\beta\leq n-1}|\eta_{\alpha\beta}|\leq\sqrt{(n-1)\sum_{\beta\leq n-1}\eta_{\alpha\beta}^2}.$$
Hence for any $i\neq r$,
$$\sum_{\beta\leq n-1}\eta_{\beta i}^2\geq c_1,$$
where $c_1$ is a positive constant  depending on $n$ and $\mu_1$. In view of \eqref{gij},
\begin{equation}\label{beta}
  \sum_{\beta\leq n-1}G^{ij}u_{\beta i}u_{\beta j}\geq c_1\sum_{i\neq r}f_i\kappa_i^2.
\end{equation}
Next we divide into two cases of $\kappa_r\leq 0$ and $\kappa_r> 0$.

$\mathbf{Case (b1):}$ $\kappa_r\leq 0$.

As Lemma 2.20 of \cite{Guan14}, then
$$\sum_{i\neq r}f_i\kappa_i^2\geq\frac{1}{n+1}\sum_if_i\kappa_i^2.$$
Similar to Case (a), we can prove \eqref{key}.

$\mathbf{Case (b2):}$ $\kappa_r> 0$.

Without loss of generality, assume that $\kappa_1\geq\cdots\geq\kappa_n$. Let  $\lambda=(\lambda_1,\cdots,\lambda_n)$ be the eigenvalues of $\eta[M_u]$, then $\lambda_i=\sum_j\kappa_j-\kappa_i$. It follows that $\lambda_1\leq\cdots\leq\lambda_n$. Then we consider two subcases.

$\mathbf{Case (b2-1):}$ $\kappa_n\geq -\epsilon_0\kappa_r$, where $\epsilon_0$ is a positive constant to be chosen later.

In order to derive \eqref{key}, the key is to prove the important inequality
\begin{equation}\label{claim}
  f_r\kappa_r\leq C\psi.
\end{equation}

\emph{Proof of \eqref{claim}:}
 If $\kappa_n\geq -\epsilon_0\kappa_r$, then for any $i\neq r$, by choosing $\epsilon_0\leq\frac{1}{2(n-2)}$ we derive
\begin{equation}\label{b21}
  \lambda_i=\sum_j\kappa_j-\kappa_i\geq[1-(n-2)\epsilon_0]\kappa_r\geq\frac{1}{2}\kappa_r>0.
\end{equation}

When $r\neq 1$, it is obvious that
$$\lambda_n\geq\cdots\geq\lambda_1\geq[1-(n-2)\epsilon_0]\kappa_r\geq\frac{1}{2}\kappa_r>0.$$
Thus
\begin{eqnarray}\label{fr}
% \nonumber to remove numbering (before each equation)
 \nonumber f_r\kappa_r &=& \sum_{p\neq r}\frac{\sigma_{k-1}(\eta|p)\sigma_l(\eta)-\sigma_k(\eta)\sigma_{l-1}(\eta|p)}{\sigma_l(\eta)^2}\kappa_r\\
 \nonumber &\leq&(n-k+1)\frac{\sigma_{k-1}(\eta)}{\sigma_l(\eta)}\kappa_r\\
  \nonumber&\leq&\frac{(n-k+1)C_n^{k-1}}{\sigma_l(\eta)}\lambda_n\cdots\lambda_{n-k+2}\kappa_r\\
  \nonumber&\leq&\frac{2(n-k+1)C_n^{k-1}}{\sigma_l(\eta)}\lambda_n\cdots\lambda_{n-k+2}\lambda_{n-k+1}\\
  &\leq&C\frac{\sigma_k(\eta)}{\sigma_l(\eta)}=C\psi.
\end{eqnarray}

When $r=1$, i.e., $\kappa_n\geq -\epsilon_0\kappa_1$. By \eqref{b21},
$$\lambda_i\geq[1-(n-2)\epsilon_0]\kappa_1>0,\quad \forall~i\geq 2.$$
Similar to \eqref{fr}, it is easy to derive $f_r\kappa_r\leq C\psi$ if $\lambda_1\geq 0$. Hence we only consider the case $\lambda_1<0$, then
$$0>\lambda_1=\kappa_2+\cdots+\kappa_n\geq(n-1)\kappa_n\geq-(n-1)\epsilon_0\kappa_1.$$
By $\lambda_1<0$, we get $\sigma_{k-1}(\eta)\leq\sigma_{k-1}(\eta|1)$. Thus
\begin{eqnarray}\label{kap}
% \nonumber to remove numbering (before each equation)
 \nonumber f_1\kappa_1 &\leq&(n-k+1)\frac{\sigma_{k-1}(\eta)}{\sigma_l(\eta)}\kappa_1 \\
 \nonumber &\leq&(n-k+1)\frac{\sigma_{k-1}(\eta|1)}{\sigma_l(\eta)}\kappa_1\\
  &\leq&\frac{(n-k+1)C_{n-1}^{k-1}}{[1-(n-2)\epsilon_0]\sigma_l(\eta)}\lambda_n\cdots\lambda_{n-k+2}\lambda_{n-k+1}.
\end{eqnarray}
Note that
\begin{eqnarray}\label{sig}
% \nonumber to remove numbering (before each equation)
 \nonumber \sigma_k(\eta) &=&\sigma_k(\eta|1)+\lambda_1\sigma_{k-1}(\eta|1)\\
 \nonumber &\geq&\lambda_n\cdots\lambda_{n-k+1}+\lambda_1\cdot C_{n-1}^{k-1}\lambda_n\cdots\lambda_{n-k+2}\\
  &=&(\lambda_{n-k+1}+C_{n-1}^{k-1}\lambda_1)\lambda_n\cdots\lambda_{n-k+2}.
\end{eqnarray}
Then
$$\lambda_{n-k+1}\geq[1-(n-2)\epsilon_0]\kappa_1\geq-\frac{1-(n-2)\epsilon_0}{(n-1)\epsilon_0}\lambda_1,$$
if $n-k+1\geq 2$. Therefore by choosing $\epsilon_0\leq \frac{1}{n-2+2(n-1)C_{n-1}^{k-1}}$ sufficiently small, we get
\begin{equation}\label{12}
  \frac{1}{2}\lambda_{n-k+1}+C_{n-1}^{k-1}\lambda_1\geq-\lambda_1\left(\frac{1-(n-2)\epsilon_0}{2(n-1)\epsilon_0}-C_{n-1}^{k-1}\right)\geq 0.
\end{equation}
Combining with \eqref{kap}-\eqref{12}, \eqref{claim} still holds.

Similar to \eqref{ewq},
\begin{eqnarray}\label{csin}
% \nonumber to remove numbering (before each equation)
  \nonumber-G^sW_s &\leq& C\psi|DW|+\frac{2}{\omega}\sum_{t,i}f_i\kappa_i(b_{ti}u_t)\gamma^{sj}b_{ji}W_s\\
  \nonumber&\leq&C\psi|DW|+\frac{2}{\omega}\sum_tf_r\kappa_r(b_{tr}u_t)\gamma^{sj}b_{jr}W_s+\frac{C}{\omega}\sum_{i\neq r}f_i|\kappa_i||\gamma^{sj}b_{ji}W_s| \\
  \nonumber&\leq& C\psi|DW|+\epsilon\sum_{i\neq r}f_i\kappa_i^2+\frac{C}{\epsilon}\sum_if_i(\gamma^{sp}b_{pi}W_s)(\gamma^{tq}b_{qi}W_t)\\
  &\leq& C\psi|DW|+\epsilon\sum_{i\neq r}f_i\kappa_i^2+\frac{C}{\epsilon}G^{ij}W_iW_j,
\end{eqnarray}
where the third inequality comes from \eqref{claim}.

By \eqref{claim},
\begin{equation}\label{fik}
  \sum_if_i|\kappa_i|=f_r\kappa_r+\sum_{i\neq r}f_i|\kappa_i|\leq C\psi+ \widetilde{\varepsilon}\sum_{i\neq r}f_i\kappa_i^2+\frac{C}{\widetilde{\varepsilon}}\sum_iG^{ii},
\end{equation}
for any $\widetilde{\varepsilon}>0$.
Then using \eqref{lwg}, \eqref{beta}, \eqref{csin} and \eqref{fik}, \eqref{key} is proved by choosing $\epsilon+\widetilde{\varepsilon}\leq c_1$.

$\mathbf{Case (b2-2):}$ $\kappa_n<-\epsilon_0\kappa_r$.

Since $\kappa_r>0$, it is obvious that $r\neq n$.
In this subcase, we get
\begin{eqnarray*}
% \nonumber to remove numbering (before each equation)
  \gamma^{sj}b_{jr}W_s &=& \gamma^{sj}b_{jr}(u_{\alpha s}+\rho_{\alpha s}u_n+\rho_{\alpha}u_{ns}-\sum_{\beta\leq n-1}u_{\beta}u_{\beta s}) \\
  &=&\omega\kappa_r(\eta_{\alpha r}+\rho_{\alpha}\eta_{nr}-\sum_{\beta\leq n-1}u_{\beta}\eta_{\beta r})+\gamma^{sj}b_{jr}\rho_{\alpha s}u_n.
\end{eqnarray*}
It follows that
\begin{equation}\label{fol}
  |\gamma^{sj}b_{jr}W_s|\leq C\omega\kappa_r(\epsilon+|\rho_{\alpha}|)+C.
\end{equation}
It is obvious that $f_r\kappa_r=(k-l)\psi-\sum_{i\neq r}f_i\kappa_i$, thus by $\kappa_n^2>\epsilon_0^2\kappa_r^2$,
\begin{eqnarray}\label{fra}
% \nonumber to remove numbering (before each equation)
 \nonumber \frac{2}{\omega}f_r\kappa_r|(\sum_tb_{tr}u_t)\gamma^{sj}b_{jr}W_s| &=&\frac{2}{\omega}[(k-l)\psi-\sum_{i\neq r}f_i\kappa_i]|(\sum_tb_{tr}u_t)\gamma^{sj}b_{jr}W_s| \\
 \nonumber &\leq& C\psi|DW|+C(\epsilon+|\rho_{\alpha}|)\sum_{i\neq r}f_i|\kappa_i|\kappa_r+C\sum_{i\neq r}f_i|\kappa_i|\\
 \nonumber &\leq& C\psi|DW|+C\epsilon_0^{-1}(\epsilon+|\rho_{\alpha}|)\sum_{i\neq r}f_i\kappa_i^2\\
 \nonumber &&+C\epsilon_0(\epsilon+|\rho_{\alpha}|)\sum_{i\neq r}f_i\kappa_r^2+\epsilon_1\sum_{i\neq r}f_i\kappa_i^2+\frac{C}{\epsilon_1}\sum_{i\neq r}f_i\\
  &\leq&C\psi|DW|+(C\epsilon_0^{-1}(\epsilon+|\rho_{\alpha}|)+\epsilon_1)\sum_{i\neq r}f_i\kappa_i^2+\frac{C}{\epsilon_1}\sum_iG^{ii},
\end{eqnarray}
for any $\epsilon_1>0$. Here the last inequality comes from $\sum_{i\neq r}f_i\kappa_r^2\leq \frac{n-1}{\epsilon_0^2}f_n\kappa_n^2\leq \frac{n-1}{\epsilon_0^2}\sum_{i\neq r}f_i\kappa_i^2$ with $r\neq n$.

Inserting \eqref{fra} into \eqref{gsws},
\begin{eqnarray}\label{eps}
% \nonumber to remove numbering (before each equation)
  \nonumber-G^sW_s &\leq&C\psi|DW|+\frac{C}{\omega}\sum_{i\neq r}f_i|\kappa_i||\gamma^{sj}b_{ji}W_s|\\
 \nonumber &&+(C\epsilon_0^{-1}(\epsilon+|\rho_{\alpha}|)+\epsilon_1)\sum_{i\neq r}f_i\kappa_i^2+\frac{C}{\epsilon_1}\sum_iG^{ii} \\
  \nonumber&\leq& C\psi|DW|+(C\epsilon_0^{-1}(\epsilon+|\rho_{\alpha}|)+2\epsilon_1)\sum_{i\neq r}f_i\kappa_i^2\\
  &&+\frac{C}{\epsilon_1}G^{ij}W_iW_j+\frac{C}{\epsilon_1}\sum_iG^{ii}.
\end{eqnarray}
Then using \eqref{gij}, \eqref{lwg}, \eqref{beta}, \eqref{fik} and \eqref{eps}, we derive
\begin{eqnarray*}
% \nonumber to remove numbering (before each equation)
  LW&\leq&-G^sW_s+C(\psi^{1-\frac{1}{k-l}}+\sum_iG^{ii}+\sum_if_i|\kappa_i|)-\omega\sum_{\beta\leq n-1}\sum_i\eta_{\beta i}^2f_i\kappa_i^2 \\
  &\leq& C\psi|DW|+\frac{C}{\epsilon_1}G^{ij}W_iW_j+(C\epsilon_0^{-1}(\epsilon+|\rho_{\alpha}|)+C\epsilon_1-c_1)\sum_{i\neq r}f_i\kappa_i^2\\
  &&+C\psi^{1-\frac{1}{k-l}}+(C+\frac{C}{\epsilon_1})\sum_iG^{ii}\\
  &\leq&C(\psi^{1-\frac{1}{k-l}}+\psi|DW|+\sum_iG^{ii}+G^{ij}W_iW_j),
\end{eqnarray*}
by choosing $\epsilon, \delta, \epsilon_1$ sufficiently small such that
$C\epsilon_0^{-1}(\epsilon+|\rho_{\alpha}|)+C\epsilon_1\leq c_1$.
\end{proof}

Then we establish boundary second order derivative estimates.
\begin{theorem}\label{b}
Suppose $\Omega$ is a uniformly $k$-convex bounded domain in $\mathbb{R}^n$ with $\partial \Omega\in C^{2,1}$, $\psi^{\frac{1}{k-l}}\in C^1(\overline{\Omega}\times\mathbb{R}\times\mathbb{S}^n)$. Let $k\geq 2,~0\leq l<k<n$, $u\in C^3(\overline{\Omega})$ be an admissible solution of Dirichlet problem \eqref{ali}, then there exists a positive constant $C$ depending on $n,k,l$, $\|u\|_{C^1}$, $\|\psi^{\frac{1}{k-l}}\|_{C^1}$ and $\partial \Omega$ such that
\begin{equation}\label{be}
  \max_{\partial\Omega}|D^2 u|\leq C.
\end{equation}
\end{theorem}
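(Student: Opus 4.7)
The plan is to bound the three natural blocks of $D^2u(x_0)$ at any boundary point $x_0\in\partial\Omega$ separately. Fix orthonormal coordinates at $x_0$ with $e_n$ the inner unit normal to $\partial\Omega$ and $e_1,\dots,e_{n-1}$ spanning $T_{x_0}\partial\Omega$, and realize $\partial\Omega$ locally as $x_n=\rho(x')$ with $\rho(x_0')=0$ and $D\rho(x_0')=0$. The pure tangential block is immediate from $u|_{\partial\Omega}=0$: differentiating $u(x',\rho(x'))\equiv 0$ twice tangentially gives
\begin{equation*}
u_{\alpha\beta}(x_0)=-u_n(x_0)\,\rho_{\alpha\beta}(x_0),\qquad 1\le\alpha,\beta\le n-1,
\end{equation*}
which is controlled by $\|u\|_{C^1}$ and $\|\partial\Omega\|_{C^2}$.

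The mixed tangential-normal entries are extracted from the auxiliary function $W$ in \eqref{w}, which is constructed precisely so that $W\equiv 0$ on $\partial\Omega\cap B_\delta(x_0)$; hence $|\partial_n W(x_0)|$ bounds $|u_{\alpha n}(x_0)|$ after subtracting already-controlled terms. I would proceed by a barrier argument on the half-ball $\Omega_\delta:=\Omega\cap B_\delta(x_0)$: take $\Psi=A_1 d - A_2|x-x_0|^2$, where $d(x)=\mathrm{dist}(x,\partial\Omega)$, augmented if necessary by an exponential-type factor in $W$ to absorb the quadratic first-order quantity $G^{ij}W_iW_j$ appearing on the right of Lemma \ref{im}. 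Combining Lemma \ref{im} with the structural lower bound $\sum_i G^{ii}\gtrsim \psi^{1-1/(k-l)}$ from \eqref{ile}, one chooses $A_1,A_2,\delta$ so that $L(\Psi\pm W)\le 0$ in $\Omega_\delta$ while $\Psi\pm W\ge 0$ on $\partial\Omega_\delta$ (using $W=0$ on the flat piece of the boundary and the $C^1$ bound on the spherical piece). The maximum principle then yields $|\partial_n W(x_0)|\le\partial_n\Psi(x_0)\le C$, hence $|u_{\alpha n}(x_0)|\le C$.

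The double-normal estimate is then essentially algebraic. At $x_0$, all entries of $A[u](x_0)=\{a_{ij}\}$ are bounded except possibly $a_{nn}\sim u_{nn}/\omega$, so the eigenvalues $\kappa$ of $A[u](x_0)$ have at most one large component $\kappa_{\max}\sim a_{nn}$ with the remaining $n-1$ uniformly bounded. If $u_{nn}(x_0)\to+\infty$, then $\lambda_i(\eta)=\sum_{j\ne i}\kappa_j$ produces $n-1$ eigenvalues of $\eta$ of size $\kappa_{\max}$ and one bounded; a direct inspection of the elementary symmetric functions then forces $\tfrac{\sigma_k}{\sigma_l}(\lambda(\eta))\gtrsim\kappa_{\max}^{k-l}\to\infty$, contradicting $\tfrac{\sigma_k}{\sigma_l}=\psi$ bounded. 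If $u_{nn}(x_0)\to-\infty$, then $\sigma_1(\lambda(\eta))=(n-1)H\to-\infty$, violating the admissibility condition $\sigma_1(\lambda(\eta))>0$ built into $\widetilde{\Gamma}_k$. Hence $|u_{nn}(x_0)|\le C$.

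The main obstacle is the tangential-normal step, where the quotient structure combined with the $\nu$-dependence of $\psi$ produces the first-order quantities $\psi|DW|$ and $G^{ij}W_iW_j$ in Lemma \ref{im} that must be absorbed by the barrier. The delicate case analysis already performed in the proof of Lemma \ref{im} — particularly the dichotomies on $\sum_{\beta\le n-1}\eta_{\beta i}^2$ and on the sign of $\kappa_n$ — is precisely what makes the barrier construction feasible here, and reflects the essential additional difficulty relative to the pure $\sigma_k$ case treated in \cite{JS22}.
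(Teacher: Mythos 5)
Your overall decomposition into pure tangential, mixed, and double-normal blocks matches the paper's, and your tangential and double-normal arguments are essentially correct. However, the mixed tangential-normal step — which is the heart of the proof — has a genuine gap.

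The issue is the choice of barrier. Your $\Psi = A_1 d - A_2|x-x_0|^2$ produces
\[
G^{ij}\Psi_{ij} = A_1\,G^{ij}d_{ij} - 2A_2\sum_i G^{ii},
\]
whose dominant term $-2A_2\sum_i G^{ii}$ has the \emph{wrong sign}. In the comparison argument one considers $\Phi = R\Psi - \widetilde W$, shows $\Phi\leq 0$ on $\partial\omega_\delta$, and then derives a contradiction from any interior maximum by proving $L\Phi > 0$ there. That requires $G^{ij}\Psi_{ij}$ to be bounded \emph{below} by a positive quantity (in fact by $C\psi^{1-1/(k-l)} + c\sum_i G^{ii}$) so that, after subtracting the $L\widetilde W$ contribution controlled by Lemma~\ref{im} and the Cauchy bound $|D\widetilde W| = R|D\Psi|$ at the critical point, one still has something positive. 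Your barrier makes $L\Phi$ more negative, not less; flipping the sign to $+A_2|x-x_0|^2$ fixes the interior sign but then $\Psi > 0$ on the flat piece $\partial_1\omega_\delta$, which ruins the boundary comparison. This is exactly where the uniform $k$-convexity of $\Omega$ must enter: the paper constructs $v = \rho(x') - x_n - \theta|x'|^2 + Kx_n^2$, which is $(k+1)$-convex by \eqref{ga}, so that after \eqref{lam}, \eqref{unk}, Proposition~\ref{prop}, and the concavity and degree-one homogeneity of $(\sigma_k/\sigma_l)^{1/(k-l)}$, one obtains $G^{ij}v_{ij} \geq C'\psi^{1-1/(k-l)} + \eta_0\sum_i G^{ii}$, simultaneously keeping $v\leq 0$ on $\partial\omega_\delta$. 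A generic quadratic in $d$ and $|x-x_0|$ cannot reproduce this; the $(k+1)$-convex barrier is the essential additional ingredient, not Lemma~\ref{im} alone. You also write that $W=0$ on the flat piece of the boundary; in fact only $\nabla'_\alpha u$ vanishes there, while the $-\tfrac12\sum_{\beta\le n-1}u_\beta^2$ part is a nonpositive $O(|x'|^2)$ quantity, and the factor $R$ in $\Phi$ is chosen large precisely to dominate this.

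Your double-normal step is correct in spirit (lower bound from $H>0$ in $\widetilde\Gamma_k$; upper bound because with one curvature $\kappa_{\max}\to\infty$ and $k<n$, $n-1$ eigenvalues of $\eta$ blow up, forcing $\sigma_k/\sigma_l\gtrsim\kappa_{\max}^{k-l}\to\infty$), matching the paper's use of Lemma~1.2 of \cite{CNS85}; just note that the hypothesis $k<n$ is what makes $\sigma_k$ pick up $k$ of the $n-1$ large eigenvalues.
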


\begin{proof}
For an arbitrary point $x\in\partial \Omega$, without loss of generality, we may assume that $x$ is the origin and that the positive $x_n$-axis in the interior normal direction to $\partial \Omega$ at the origin. For convenience, in the following we use the notation $C$ to represent some positive constant depending on $n, k, l, \|u\|_{C^1}$, $\|\psi^{\frac{1}{k-l}}\|_{C^1}$ and $\partial \Omega$. The proof will be divided into three steps.

$\mathbf{Step~1:}$ Estimates of $u_{\alpha\beta}(0),~\alpha,\beta=1,\cdots,n-1$.

Near the origin, the boundary $\partial\Omega$ is represented by
\begin{equation}\label{xn}
  x_n=\rho(x')=\frac{1}{2}\sum_{\alpha<n}\kappa_{\alpha}^bx_{\alpha}^2+O(|x'|^3),
\end{equation}
where $\kappa_1^b,\cdots,\kappa_{n-1}^b$ are the principal curvatures of $\partial \Omega$ at the origin and $x'=(x_1,\cdots,x_{n-1})$. Differentiating the boundary condition $u=0$ on $\partial\Omega$ twice, then
$$|u_{\alpha\beta}(0)|\leq C, \quad 1\leq \alpha,\beta\leq n-1.$$

$\mathbf{Step~2:}$ Estimates of $u_{\alpha n}(0),~\alpha=1,\cdots,n-1$.
%Next, we continue to prove
%\begin{equation}\label{qf}
%  |u_{\alpha n}(0)|\leq C, \quad\mbox{for}~1\leq\alpha\leq n-1.
%\end{equation}

Let $\omega_{\delta}=\{x\in \Omega:\rho(x')<x_n<\rho(x')+\delta^2,|x'|<\delta\}$. Since $\Omega$ is uniformly $k$-convex, there exist two positive constants $\theta$ and $K$ satisfying
\begin{equation}\label{ga}
  (\kappa_1^b-3\theta,\cdots,\kappa_{n-1}^b-3\theta,2K)\in \Gamma_{k+1}.
\end{equation}
Define
\begin{equation}\label{v}
  v=\rho(x')-x_n-\theta|x'|^2+Kx_n^2.
\end{equation}
Note that the boundary $\partial\omega_{\delta}$ consists of three parts: $\partial\omega_{\delta}=\partial_1\omega_{\delta}\cup\partial_2\omega_{\delta}\cup\partial_3\omega_{\delta}$, where $\partial_1\omega_{\delta}$, $\partial_2\omega_{\delta}$ are defined by $\{x_n=\rho\}\cap\overline{\omega}_{\delta}$, $\{x_n=\rho+\delta^2\}\cap\overline{\omega}_{\delta}$ respectively, and $\partial_3\omega_{\delta}$ is defined by $\{|x'|=\delta\}\cap\overline{\omega}_{\delta}$.
When $\delta$ depending on $\theta$ and $K$ is sufficiently small, we have
\begin{equation}\label{vle}
  \begin{split}
  &v\leq-\frac{\theta}{2}|x'|^2,\quad\mbox{on}~\partial_1\omega_{\delta},\\
  &v\leq-\frac{\delta^2}{2},~~~\quad\quad\mbox{on}~\partial_2\omega_{\delta},\\
  &v\leq-\frac{\theta\delta^2}{2},\quad\quad\mbox{on}~\partial_3\omega_{\delta}.
  \end{split}
\end{equation}
In view of \eqref{xn} and \eqref{ga}, $v$ is $(k+1)$-convex on $\overline{\omega}_{\delta}$. Thus there exists a uniform constant $\eta_0>0$ depending only on $\theta, \partial \Omega$ and $K$ satisfying
$$\lambda(D^2v-2\eta_0I)\in \Gamma_{k+1}~~\mbox{for}~~k<n~~~\mbox{and}~~\lambda(D^2v-2\eta_0I)\in\Gamma_n~~\mbox{for}~~k=n,~~\mbox{on}~~\overline{\omega}_{\delta}.$$
By Proposition \ref{prop},
\begin{equation}\label{lam}
  \lambda\left(\frac{1}{\omega}\{\gamma^{is}(v_{st}-2\eta_0\delta_{st})\gamma^{jt}\}\right)\in \Gamma_k,~~\mbox{on}~~\overline{\omega}_{\delta}.
\end{equation}
Then we consider the following barrier function on $\overline{\omega}_{\delta}$ for sufficiently small $\delta$,
\begin{equation}\label{psi}
  \Psi:=v-td+\frac{N}{2}d^2,
\end{equation}
where $v(x)$ is defined as \eqref{v}, $d(x):=\mbox{dist}(x,\partial \Omega)$ is the distance from $x$ to the boundary $\partial \Omega$, $t, N$ are two positive constants to be determined later.

Let
\begin{equation}\label{ww}
  \widetilde{W}=1-e^{-bW},
\end{equation}
where $W$ is defined as \eqref{w}, $b$ is a sufficiently large positive constant to be determined later.

Using \eqref{key}, we obtain
\begin{eqnarray*}
% \nonumber to remove numbering (before each equation)
  L\widetilde{W}&=&be^{-bW}LW-b^2e^{-bW}G^{ij}W_iW_j\\
  &\leq&Cbe^{-bW}(\psi^{1-\frac{1}{k-l}}+\psi|DW|+\sum_iG^{ii}+G^{ij}W_iW_j)-b^2e^{-bW}G^{ij}W_iW_j \\
  &\leq&C(\psi^{1-\frac{1}{k-l}}+\psi|D\widetilde{W}|+\sum_iG^{ii}),
\end{eqnarray*}
by choosing $b$ sufficiently large.

Next, we consider the function
\begin{equation}\label{Phi}
\Phi:=R\Psi-\widetilde{W},
\end{equation}
where $R$ is a positive constant sufficiently large to be determined later. First choose $\delta\leq\frac{2t}{N}$ such that
\begin{equation}\label{td}
  -td+\frac{N}{2}d^2\leq 0,~\mbox{on}~\partial\omega_{\delta}.
\end{equation}
Then combining with \eqref{vle} and \eqref{td}, we have
$$\Phi\leq 0, ~\mbox{on}~\partial\omega_{\delta}.$$
In order to prove
\begin{equation}\label{ph}
  \Phi\leq 0,~\mbox{on}~\overline{\omega}_{\delta},
\end{equation}
we need to show that $\Phi$ attains its maximum on $\partial\omega_{\delta}$.
 Suppose $\Phi$ attains its maximum at an interior point $x_0\in\omega_{\delta}$. Using (1) of Proposition \ref{sigma}, \eqref{px} and \eqref{lam}, we derive for $k\geq 2$,
\begin{equation}\label{unk}
  \lambda\left(\frac{1}{\omega}\{\gamma^{is}(v_{st}-2\eta_0\delta_{st})\gamma^{jt}\}\right)\in \Gamma_k\subset\widetilde{\Gamma}_k,~~\mbox{on}~~\overline{\omega}_{\delta}.
\end{equation}
Since $\left(\frac{\sigma_k}{\sigma_l}\right)^{\frac{1}{k-l}}$ is concave and homogeneous of degree one, by \eqref{pro} and \eqref{unk}, we get at $x_0$,
\begin{eqnarray*}
% \nonumber to remove numbering (before each equation)
  G^{ij}(D^2v-\eta_0I)_{ij}&\geq&(k-l)\psi^{1-\frac{1}{k-l}}G^{\frac{1}{k-l}}(D^2v-\eta_0I,Du)\\
  &\geq& C\psi^{1-\frac{1}{k-l}}\left(\frac{\sigma_k}{\sigma_l}\right)^{\frac{1}{k-l}}(D^2v-\eta_0I)\\
  &\geq& C'\psi^{1-\frac{1}{k-l}},
\end{eqnarray*}
where $C'$ is a constant depending on $k,l,\|u\|_{C^1}$ and $\eta_0$.
Due to $|Dd|=1$ on $\partial\Omega$, we can choose $\delta$ sufficiently small such that
\begin{equation}\label{du}
  \frac{1}{2}\leq |Dd|\leq 1,~\forall ~x\in \omega_{\delta},
\end{equation}
then at $x_0$,
\begin{equation*}
  G^{ij}d_id_j\geq C\sum_iG^{ii}\geq C\psi^{1-\frac{1}{k-l}}.
\end{equation*}
It follows that
\begin{eqnarray*}
% \nonumber to remove numbering (before each equation)
  G^{ij}\Psi_{ij} &=&G^{ij}v_{ij}-tG^{ij}d_{ij}+NG^{ij}d_id_j+NdG^{ij}d_{ij} \\
  &\geq&C'\psi^{1-\frac{1}{k-l}}+\eta_0\sum_iG^{ii}+CN\psi^{1-\frac{1}{k-l}}+(Nd-t)G^{ij}d_{ij}\\
  &\geq&CN\psi^{1-\frac{1}{k-l}}+\frac{\eta_0}{2}\sum_iG^{ii},
\end{eqnarray*}
by choosing $t, \delta$ sufficiently small such that $CN\delta+Ct\leq\frac{\eta_0}{2}$.
Note that at $x_0$,
\begin{equation}\label{x0}
  \frac{1}{R}|D\widetilde{W}|=|D\Psi|=|Dv-tDd+NdDd|\leq C(1+t)+C\delta N\leq C,
\end{equation}
by choosing $t, \delta$ sufficiently small.
Therefore at $x_0$,
\begin{eqnarray*}
% \nonumber to remove numbering (before each equation)
  0 \geq L\Phi&=& L(R\Psi-\widetilde{W}) \\
  &=& RG^{ij}\Psi_{ij}-R\psi_{u_i}\Psi_i-L\widetilde{W}\\
  &\geq&CNR\psi^{1-\frac{1}{k-l}}+\frac{R\eta_0}{2}\sum_iG^{ii}-CR\psi^{1-\frac{1}{k-l}}|D\Psi|\\
  &&-C\psi^{1-\frac{1}{k-l}}-C\psi|D\widetilde{W}|-C\sum_iG^{ii}\\
  &>&CNR\psi^{1-\frac{1}{k-l}}+\frac{R\eta_0}{4}\sum_iG^{ii}-CR\psi^{1-\frac{1}{k-l}}-CR\psi \\
  &\geq&(CN+C\eta_0-C\psi^{\frac{1}{k-l}}-C)R\psi^{1-\frac{1}{k-l}}>0,
\end{eqnarray*}
 by choosing $N, R$ sufficiently large which is a contradiction.

 Hence the function $\Phi$ can not attain its maximum at an interior point of $\omega_{\delta}$ when $R, N$ are large enough and $\delta, t$ are small enough. Thus \eqref{ph} is proved. Since $\Phi(0)=0$, by Hopf's lemma, we get $\Phi_n(0)\leq 0$. Then
 $$u_{\alpha n}(0)\geq -C.$$
 The above arguments also hold with respect to $-\nabla'_{\alpha}u-\frac{1}{2}\sum_{\beta\leq n-1}u_{\beta}^2$. Hence we get $|u_{\alpha n}(0)|\leq C$.

 $\mathbf{Step~3:}$ Estimates of $u_{nn}(0)$.

Since $H[M_u]>0$, it is sufficient to establish the upper bound of $u_{nn}(0)$. At the origin,
$$u_{\alpha\beta}=-u_n\kappa_{\alpha}^b\delta_{\alpha\beta},~ \mbox{for}~1\leq \alpha,\beta\leq n-1,$$
and
$$g^{ij}=\delta_{ij}-\frac{|Du|^2}{\omega^2}\delta_{in}\delta_{jn}.$$
Hence the matrix of $\{a_{ij}\}$ is
\begin{equation*}
\begin{pmatrix}
-\cfrac{u_n\kappa_1^b}{\omega}& 0 &\cdots& 0 &\cfrac{u_{1n}}{\omega}\\
0 &-\cfrac{u_n\kappa_2^b}{\omega} &\cdots& 0 &\cfrac{u_{2n}}{\omega}\\
\vdots & \vdots &\ddots &\vdots&\vdots\\
0& 0&\cdots&-\cfrac{u_n\kappa_{n-1}^b}{\omega}&\cfrac{u_{n-1 n}}{\omega}\\
\cfrac{u_{n1}}{\omega}&\cfrac{u_{n2}}{\omega}&\cdots&\cfrac{u_{n n-1}}{\omega}&\cfrac{u_{nn}}{\omega^3}
\end{pmatrix}.
\end{equation*}
By Lemma 1.2 of \cite{CNS85} and the estimates of $u_{\alpha\beta}(0), u_{\alpha n}(0)$, there exists a constant $R_1>0$ sufficiently large such that if $u_{nn}>R_1$, then
\begin{equation}
\left\{
\begin{aligned}
&\widetilde{\lambda}_{\alpha}=-\frac{u_n\kappa_{\alpha}^b}{\omega}+o(1),~~\alpha=1,\cdots,n-1,\\
&\widetilde{\lambda}_n = \frac{u_{nn}}{\omega^3}+O(1),
\end{aligned}
\right.
\end{equation}
where $\widetilde{\lambda}(a_{ij})=(\widetilde{\lambda}_1,\cdots,\widetilde{\lambda}_n)$ denotes the eigenvalues of $A=\{a_{ij}\}$.

Since $k<n$, we have at the origin,
\begin{eqnarray*}
% \nonumber to remove numbering (before each equation)
  \psi&=& \frac{\sigma_k}{\sigma_l}\left(\sum_i\widetilde{\lambda}_i-\widetilde{\lambda}_1,\cdots,\sum_i\widetilde{\lambda}_i-\widetilde{\lambda}_{n-1},
  \sum_i\widetilde{\lambda}_i-\widetilde{\lambda}_n\right) \\
  &=& \frac{\sigma_k}{\sigma_l}\left(\widetilde{\lambda}_n+\sum_{\alpha=2}^{n-1}\widetilde{\lambda}_{\alpha},\cdots,\widetilde{\lambda}_n+
  \sum_{\alpha=1}^{n-2}\widetilde{\lambda}_{\alpha},\sum_{\alpha=1}^{n-1}\widetilde{\lambda}_{\alpha}\right) \\
  &=&\frac{\sigma_k}{\sigma_l}\left(\widetilde{\lambda}_n-\sum_{\alpha=2}^{n-1}\frac{u_n\kappa_{\alpha}^b}{\omega}+o(1),\cdots,
  \widetilde{\lambda}_n-\sum_{\alpha=1}^{n-2}\frac{u_n\kappa_{\alpha}^b}{\omega}+o(1),-\sum_{\alpha=1}^{n-1}\frac{u_n\kappa_{\alpha}^b}{\omega}
  +o(1)\right)\\
  &\geq&\frac{\widetilde{\lambda}_n^k+o\left(\widetilde{\lambda}_n^{k-1}\right)}{C_n^l\widetilde{\lambda}_n^l+O\left(\widetilde
  {\lambda}_n^{l-1}\right)},
\end{eqnarray*}
by choosing $R_1$ large enough.
It implies the uniform upper bound of $u_{nn}(0)$. Hence \eqref{be} is proved.
\end{proof}
%\begin{remark}
%We emphasize that in the case $k<n$, the suitable function $\underline{\psi}$ in \cite{JS22} can be ignored to derive boundary double-normal second order derivative estimates.
%In \cite{JS22}, Jiao-Sun need the existence of a suitable function $\underline{\psi}(x,z)\in C^0(\overline{\Omega}\times\mathbb{R})$ to prove that $-u_n(0)$ has a positive lower bound, which will used to derive boundary normal-normal second order derivative estimates. But we do not need the existence of $\underline{\psi}$ in Theorem \ref{main} since we only consider the case that $k<n$.
%\end{remark}

\emph{Proof of Theorem \ref{main}.}
Based on the a priori estimates for non-degenerate case, $C^{2,\alpha}$ estimates can be established by Evans-Krylov theory and higher order estimates followed by Schauder theory. Then the existence result can be derived by the continuity method and the uniqueness assertion is immediate from the maximum principle,  more details see \cite{GT83}. Then Theorem \ref{main} can be proved by approximation as in \cite{JW22}.

\emph{Proof of Theorem \ref{2}.}
We only need to prove interior $C^2$ estimates and mixed boundary estimates, the other estimates are same as Hessian quotient case in Theorem \ref{b}.

$\mathbf{Interior~estimates:}$
Consider the auxiliary function $\overline{Q}=\log H-\log(v-a)$. Applying the similar arguments in Theorem \ref{a} and using \eqref{a1} and  Proposition \ref{27}, it is easy to derive interior estimates.

$\mathbf{Mixed~boundary~estimates:}$
The proof is similar to Step 2 in Theorem \ref{b}, we only give the main ideas here. Consider the function $\Phi$ defined by \eqref{Phi} and assume $\Phi$ attains its maximum at an interior point $x_0\in \omega_{\delta}$.
Let $H$ denote the mean curvature of $M_u$ at $X_0=(x_0, u(x_0))$. As in \cite{JW22}, we consider the following two cases:

$\mathbf{Case~(i)}~H\leq 1$.
Let $\widetilde{a}_{ij}=\omega^{-1}g^{il}u_{lj}$. Then
$$|\widetilde{a}_{ij}(x_0)|\leq \widetilde{C}H\leq\widetilde{C},$$
for each $1\leq i,j \leq n$. Here $\widetilde{C}$ is some uniform positive constant. Then at $x_0$,
\begin{eqnarray*}
% \nonumber to remove numbering (before each equation)
  0&=&\Phi_i=(R\Psi-\widetilde{W})_i \\
  &=&R(v_i-td_i+Ndd_i)-be^{-bW}(u_{\alpha i}+\rho_{\alpha}u_{ni}+\rho_{\alpha i}u_n-\sum_{\beta\leq n-1}u_{\beta}u_{\beta i}),
\end{eqnarray*}
for $i=1,\cdots, n$. Hence we derive
\begin{eqnarray*}
% \nonumber to remove numbering (before each equation)
  &&\widetilde{a}_{n\alpha}+\rho_{\alpha}\widetilde{a}_{nn}+\omega^{-1}g^{ni}\rho_{i\alpha}u_n-\sum_{\beta\leq n-1}u_{\beta}\widetilde{a}_{n\beta}\\
  &=& \omega^{-1}g^{ni}(u_{i\alpha}+\rho_{\alpha}u_{in}+\rho_{i\alpha}u_n-\sum_{\beta\leq n-1}u_{\beta}u_{i\beta})\\
  &=& Rb^{-1}e^{bW}\omega^{-1}(g^{ni}v_i-tg^{ni}d_i+Ndg^{ni}d_i).
\end{eqnarray*}
Since $v_n(0)=-1$ and $v_{\gamma}(0)=0$ for $\gamma=1, \cdots, n$, we get at $x_0$,
$$\widetilde{a}_{n\alpha}\leq-\widetilde{c}R+C,$$
by choosing $\delta$ and $t$ sufficiently small.
Here $\widetilde{c}$ and $C$ are positive constants depending on $\|u\|_{C^1}$. Then if $R$ is sufficiently large, $\widetilde{a}_{n\alpha}(x_0)$ is unbounded, which implies a contradiction since $|\widetilde{a}_{n\alpha}(x_0)|\leq \widetilde{C}$.

$\mathbf{Case~(ii)}~H>1$.
According to the proof of Lemma \ref{im}, if $\psi^{\frac{1}{k-1}}\in C^1(\overline{\Omega}\times\mathbb{R}\times\mathbb{S}^n)$, we derive
\begin{equation}\label{k1}
  LW\leq C\left(\psi^{1-\frac{1}{k-1}}+\psi|DW|+\sum_iG^{ii}+G^{ij}W_iW_j\right).
\end{equation}
Thus by \eqref{ww}, \eqref{k1} and choosing $b$ sufficiently large,
\begin{equation}\label{11}
  L\widetilde{W}\leq C(\psi^{1-\frac{1}{k-1}}+\psi|D\widetilde{W}|+\sum_i G^{ii}).
\end{equation}

Since $\sigma_k^{\frac{1}{k}}$ is concave and homogeneous of degree one, then by \eqref{pro} and \eqref{unk}, we get at $x_0$,
\begin{equation}\label{nn}
% \nonumber to remove numbering (before each equation)
  G^{ij}(D^2v-\eta_0I)_{ij}\geq  C\psi^{1-\frac{1}{k}}\sigma_k^{\frac{1}{k}}(D^2v-\eta_0I)\geq C^{\ast}\psi^{1-\frac{1}{k}},
\end{equation}
where $C^{\ast}$ is a constant depending on $k,\|u\|_{C^1}$ and $\eta_0$.

Combining with \eqref{a0}, \eqref{a1} and \eqref{du}, then at $x_0$,
\begin{equation}\label{gdd}
  G^{ij}d_id_j\geq C\sum_i\sigma_{k-1}(\lambda|i)\geq C\sigma_{k-1}\geq CH^{\frac{1}{k-1}}\psi^{1-\frac{1}{k-1}}\geq C\psi^{1-\frac{1}{k-1}}.
\end{equation}
Using \eqref{psi}, \eqref{nn} and \eqref{gdd}, it follows that at $x_0$,
\begin{eqnarray}\label{gg}
% \nonumber to remove numbering (before each equation)
 \nonumber G^{ij}\Psi_{ij}&=&G^{ij}v_{ij}-tG^{ij}d_{ij}+NG^{ij}d_id_j+NdG^{ij}d_{ij} \\
 \nonumber&\geq&C^{\ast}\psi^{1-\frac{1}{k}}+\eta_0\sum_iG^{ii}+CN\psi^{1-\frac{1}{k-1}}+(Nd-t)G^{ij}d_{ij}\\
  &\geq&CN\psi^{1-\frac{1}{k-1}}+\frac{\eta_0}{2}\sum_iG^{ii},
\end{eqnarray}
by choosing $t, \delta$ sufficiently small such that $CN\delta+Ct\leq\frac{\eta_0}{2}$.

Thus using \eqref{Phi}, \eqref{x0}, \eqref{11}, \eqref{gdd}, \eqref{gg} and the condition $\psi^{\frac{1}{k-1}}\in C^{1,1}(\overline{\Omega}\times\mathbb{R}\times\mathbb{S}^n)$, we have at $x_0$,
\begin{eqnarray*}
% \nonumber to remove numbering (before each equation)
  0 \geq L\Phi&=& RG^{ij}\Psi_{ij}-R\psi_{u_i}\Psi_i-L\widetilde{W}\\
  &\geq&CNR\psi^{1-\frac{1}{k-1}}+\frac{R\eta_0}{2}\sum_iG^{ii}-CR\psi^{1-\frac{1}{k-1}}|D\Psi|\\
  &&-C\psi^{1-\frac{1}{k-1}}-C\psi|D\widetilde{W}|-C\sum_iG^{ii}\\
  &>&CNR\psi^{1-\frac{1}{k-1}}+\frac{R\eta_0}{4}\sum_iG^{ii}-CR\psi^{1-\frac{1}{k-1}}-CR\psi \\
  &\geq&(CN+C\eta_0-C\psi^{\frac{1}{k-1}}-C)R\psi^{1-\frac{1}{k-1}}>0,
\end{eqnarray*}
 by choosing $N, R$ sufficiently large which is a contradiction. Hence $\Phi$ only attains its maximum on $\partial\omega_{\delta}$ and mixed boundary estimates are followed by Hopf's lemma.

%%%%%%%%%%%%%%%%%%%%%%%%%%%%%%%%%%%%%%%%%%%
% \bibliographystyle{siam}
% \bibliography{article}

\end{document}